\newcommand{\R}{\mathbb{R}}
\newcommand{\C}{\mathbb{C}}
\newcommand{\N}{\mathbb{N}}
\newcommand{\ep}{\varepsilon}
\newcommand{\ity}{\infty}
\newcommand{\dps}{\displaystyle}
\newcommand{\f}{\displaystyle\frac}
\newtheorem{theorem}{Theorem}[section]
\newtheorem{lemma}[theorem]{Lemma}
\theoremstyle{remark}
\newtheorem{remark}{Remark}[section]
\theoremstyle{definition}
\newtheorem{definition}{Definition}[section]
\numberwithin{equation}{section}
\def\@cite#1#2{[{{\bfseries #1}\if@tempswa , #2\fi}]}
\begin{document}
\begin{center}
\Large{{\bf
Blow-up and lifespan estimates for solutions to the weakly coupled system of nonlinear damped wave equations outside a ball}}
\end{center}

\vspace{5pt}

\begin{center}
Tuan Anh Dao
\footnote{
School of Applied Mathematics and Informatics, Hanoi University of Science and Technology, No.1 Dai Co Viet road, Hanoi, Vietnam/Institute of Mathematics, Vietnam Academy of Science and Technology, No.18 Hoang Quoc Viet road, Hanoi, Vietnam.
E-mail:\ {\tt anh.daotuan@hust.edu.vn} (\textit{Corresponding author})},
Masahiro Ikeda
\footnote{
Department of Mathematics, Faculty of Science and Technology, Keio University, 3-14-1 Hiyoshi, Kohoku-ku, Yokohama, 223-8522, Japan/Center for Advanced Intelligence Project, RIKEN, Japan.
E-mail:\ {\tt masahiro.ikeda@keio.jp/masahiro.ikeda@riken.jp}}\ 

\end{center}

\newenvironment{summary}{\vspace{.5\baselineskip}\begin{list}{}{%
     \setlength{\baselineskip}{0.85\baselineskip}
     \setlength{\topsep}{0pt}
     \setlength{\leftmargin}{12mm}
     \setlength{\rightmargin}{12mm}
     \setlength{\listparindent}{0mm}
     \setlength{\itemindent}{\listparindent}
     \setlength{\parsep}{0pt}
     \item\relax}}{\end{list}\vspace{.5\baselineskip}}
\begin{summary}
{\footnotesize {\bf Abstract.}
In this paper, we consider the initial-boundary value problems with several fundamental boundary conditions (the Dirichlet/Neumann/Robin boundary condition) for the multi-component system of semi-linear classical damped wave equations outside a ball. By applying a test function approach with a judicious choice of test functions, which approximates the harmonic functions being subject to these boundary conditions on $\partial \Omega$, simultaneously we have succeeded in proving the blow-up result in a finite time as well as in catching the sharp upper bound of lifespan estimates for small solutions in two and higher spatial dimensions. Moreover, such kind of these results will be discussed in one-dimensional case at the end of this work.
}
\end{summary}

\noindent{\footnotesize{\it Mathematics Subject Classification}\/ (2020): %
35B44; 
35A01; 
35L15;
35L05
}\\
{\footnotesize{\it Key words and phrases}\/: %
Blow-up, Lifespan, Damped wave equations, Weakly coupled system, Boundary conditions, Exterior domain
}
\tableofcontents

\section{Introduction}
This paper is mainly concerned with investigating upper bound of lifespan estimates for small solutions to the following weakly coupled system of semi-linear classical damped wave equations in an exterior domain:
\begin{equation}\label{Equation_Main}
\begin{cases}
\partial^2_t u_1(t,x)- \Delta u_1(t,x)+ \partial_t u_1(t,x)= |u_{\mathtt{k}}(t,x)|^{p_1}, &(t,x) \in (0,T) \times \Omega, \\
\partial^2_t u_2(t,x)- \Delta u_2(t,x)+ \partial_t u_2(t,x)= |u_1(t,x)|^{p_2}, &(t,x) \in (0,T) \times \Omega, \\
\quad \vdots \\
\partial^2_t u_{\mathtt{k}}(t,x)- \Delta u_{\mathtt{k}}(t,x)+ \partial_t u_{\mathtt{k}}(t,x)= |u_{\mathtt{k}-1}(t,x)|^{p_{\mathtt{k}}}, &(t,x) \in (0,T) \times \Omega, \\
\alpha\f{\partial u_\ell}{\partial n^+}(t,x)+ \beta u_\ell(t,x)=0, &(t,x) \in (0,T) \times \partial\Omega,\,\ell=1,2,\cdots,\mathtt{k}, \\
u_\ell(0,x)= \varepsilon u_{0,\ell}(x),\quad \partial_t u_\ell(0,x)= \varepsilon u_{1,\ell}(x), &x \in \Omega, \,\ell=1,2,\cdots,\mathtt{k},
\end{cases}
\end{equation}
where $\mathtt{k}\ge 2$, $p_\ell>1$ with $\ell=1,2,\cdots,\mathtt{k}$ and $T>0$. The domain $\Omega$ is given by $\Omega:= \big\{x\in \R^d \,\,:\,\, |x|>1\big\}$ with $d\ge 2$, and $u_{\ell}:(0,T)\times\Omega\rightarrow \C$ with $\ell=1,2,\cdots,\mathtt{k}$ denote unknown functions to the problem (\ref{Equation_Main}). In addition, $n^+$ stands for the outward unit normal on $\partial\Omega$. The positive constant $\varepsilon$ presents the size of initial data. The given functions $u_{0,\ell}$ and $u_{1,\ell}$ with $\ell=1,2,\cdots,\mathtt{k}$ represent the shape of the initial data. Let $\alpha,\beta\in \mathbb{R}$ be real constants satisfying $(\alpha,\beta)\ne (0,0)$. The boundary condition is called the Dirichlet boundary if $\alpha=0$, the Neumann boundary condition if $\beta=0$ and the Robin boundary condition otherwise. Here $\Delta=\Delta_{\alpha,\beta}$ denotes the Laplace operator dependent on the boundary condition in the open subset $\Omega$ of $\R^d$ (see Notations in the end of this section for the precise definition). \medskip

To get started, let us make some attention to the Cauchy problem of \eqref{Equation_Main} in the whole space, i.e. $\Omega=\R^d$. Concerning $\mathtt{k}=1$, the single classical semi-linear damped wave equation
\begin{equation}\label{Equation_1}
\begin{cases}
\partial_t^2u-\Delta u+\partial_tu=|u|^p, & (t,x)\in (0,T) \times \R^d,\\
u(0,x)= \varepsilon u_0(x),\quad \partial_t u(0,x)= \varepsilon u_1(x), &x\in \R^d,
\end{cases}
\end{equation}
Todorova-Yordanov \cite{TodorovaYordanov2001} introduced the so-called Fujita exponent $p_{\rm Fuj}(d):= 1+ \frac{2}{d}$ (see more \cite{Fujita1966,Hayakawa1973,Weissler1981}, the well-known Fujita exponent for the semi-linear heat equation), which classifies between the global (in time) existence of small solutions to \eqref{Equation_1} for $p>p_{\rm Fuj}(d)$ and a small data blow-up result in the inverse case $1<p<p_{\rm Fuj}(d)$. Especially, the treatment of the critical case $p=p_{\rm Fuj}(d)$, verified by Zhang \cite{Zhang2001} later, is also to conclude nonexistence of global solutions to \eqref{Equation_1} even for small data. When the blow-up phenomenon in finite time occurs, the maximal existence time of solutions to \eqref{Equation_1}, which is the so-called lifespan, can be estimated by a series of previous works \cite{LiZhou1995, KiraneQafsaoui2002,Nishihara2003, Nishihara2011,IkedaOgawa2016,LaiZhou2019,FujiwaraIkedaWakasugi2019,IkedaWakasugi2020} and references therein as follows:
$$ {\rm LifeSpan}(u)\sim
\begin{cases}
\ep^{-\frac{2(p-1)}{2-d(p-1)}} &\text{ if }\ \ 1<p<p_{\rm Fuj}(d), \\
\exp\big(C\ep^{-(p-1)}\big) &\text{ if }\ \ p=p_{\rm Fuj}(d).
\end{cases}  $$
Regarding $\mathtt{k}=2$ of \eqref{Equation_Main} in $\R^d$ the authors in \cite{SunWang2007,Narazaki2009,NishiharaWakasugi2014,DaoReissig2021} described the following critical curve in the $p_1-p_2$ plane:
$$ \gamma_{\max}(p_1,p_2):=\frac{\max\{p_1,p_2\}+1}{p_1p_2-1}=\frac{d}{2}. $$
More specifically, they proved that the global (in time) small data Sobolev solutions exist if $\gamma_{\max}(p_1,p_2)<d/2$, meanwhile, every non-trivial local (in time) weak solution blows up in finite time if $\gamma_{\max}(p_1,p_2)\geqslant d/2$. One point worth noticing in \cite{NishiharaWakasugi2014,NishiharaWakasugi2015} is that the sharp upper bound estimate for the lifespan of solutions in the subcritical case $\gamma_{\max}(p_1,p_2)>d/2$ was given by
$$ {\rm LifeSpan}(u_1,u_2) \le C\varepsilon^{-\frac{1}{\gamma_{\max}(p_1,p_2)-d/2}}. $$
Quite recently, for the purpose of making the study of lifespan self-contained, Chen-Dao \cite{ChenDao2021} have found out the sharp lifespan estimates in the critical case $\gamma_{\max}(p_1,p_2)=d/2$, namely,
$$ {\rm LifeSpan}(u_1,u_2)\sim
\begin{cases} 
\exp\left( C\varepsilon^{-(p_1-1)} \right)&\mbox{if}\ \ p_1=p_2,\\
\exp\left(C\ep^{-(p_1 p_2-p_{\mathrm{Fuj}}(d))}\right) &\mbox{if}\ \ p_1\neq p_2.
\end{cases} $$
To demonstrate this, the authors have applied a suitable test function method linked to the technical estimates for nonlinear differential inequalities and have constructed polynomial-logarithmic type time-weighted Sobolev spaces as well in terms of deriving upper bound estimates and lower bound estimates for the lifespan, respectively. For the more general cases $\mathtt{k}\ge 3$ of \eqref{Equation_Main} in $\R^d$ , Takeda \cite{Takeda2009} obtained both the global existence and the finite time blow-up result for small solutions to establish the critical condition
$$ \gamma_{\max}:= \max\{\gamma_1,\gamma_2,\cdots,\gamma_{\mathtt{k}}\}= \frac{d}{2} $$
for any $d\le 3$, which was further extended by Narazaki \cite{Narazaki2011} for any $d\ge4$ thanks to using weighted Sobolev spaces. Here we note that the aforementioned parameters $\gamma_\ell$ with $\ell=1,2,\cdots,\mathtt{k}$ are introduced as in \eqref{Parameter_gamma}. Not much later, Nishihara-Wakasugi \cite{NishiharaWakasugi2015} improved these results for any $d\ge 1$ by the application of a weighted energy method. One may see that the authors in the latter paper also showed the following lifespan estimates for solutions from both the above and the below:
\begin{align*}
c\varepsilon^{-\frac{1}{\gamma_{\max}-d/2}+\delta}\le {\rm LifeSpan}(u_1,u_2,\cdots,u_{\mathtt{k}})\le C\varepsilon^{-\frac{1}{\gamma_{\max}-d/2}}
\end{align*}
for any small number $\delta>0$. In other words, they gave an almost optimal estimate for the lifespan but it seems to be far from lower bound one to upper bound one (see also \cite{HayashiNaumkinTominaga2015}).\medskip

Turning back to our models \eqref{Equation_Main}, as far as there have been a lot of investigations in the study of the exterior problems for semi-linear classical damped wave equation. It is significant to recognize that the essential difference to the initial problem originates from the influence of reflection at the boundary and the lack of symmetric properties including scale-invariance, rotation-invariance and so on. Let us recall several previous literatures involving the the initial-boundary value problem for the single equation in an exterior domain, i.e. \eqref{Equation_Main} with $\mathtt{k}=1$, as follows:
\begin{equation}\label{Equation_2}
\begin{cases}
\partial^2_t u(t,x)- \Delta u(t,x)+ \partial_t u(t,x)= |u(t,x)|^{p}, &(t,x) \in (0,T) \times \Omega, \\
u(t,x)=0, &(t,x) \in (0,T) \times \partial\Omega, \\
u(0,x)= \varepsilon u_0(x),\quad \partial_t u(0,x)= \varepsilon u_1(x), &x \in \Omega.
\end{cases}
\end{equation}
Particularly, Ikehata \cite{Ikehata2002,Ikehata2004,Ikehata2005} and Ono \cite{Ono2003} succeeded in proving the existence of global solutions to \eqref{Equation_2} if $p>p_{\rm Fuj}(2)$ for $d=2$ and $1+\frac{4}{d+2}<p\le 1+\frac{2}{d-2}$ for $d=3,4,5$. Meanwhile, one can find in the paper of Ogawa-Takeda \cite{OgawaTakeda2009} that the solutions to \eqref{Equation_2} cannot exist globally if $1<p<p_{\rm Fuj}(d)$ for any $d\ge 1$ by using Kaplan-Fujita method. Afterwards, the critical case $p=p_{\rm Fuj}(d)$ was independently filled by Fino-Ibrahim-Wehbe \cite{FinoIbrahimWehbe2017} and Lai-Yin \cite{LaiYin2017}, where the finite time blow-up phenomena also occurs. To demonstrate these blow-up results, the authors employed a suitable test function method essentially, which was originally developed by Baras-Pierre \cite{BarasPierre1985} (see also \cite{MitidieriPohozaev2001,Zhang2001}). More precisely, the aid of the first eigenfunction of $-\Delta$ as well as the corresponding first eigenvalue over $\Omega$ was taken into consideration in \cite{OgawaTakeda2009,FinoIbrahimWehbe2017}, whereas the employment of some properties on the Riemann-Liouville fractional derivative and the harmonic function in $\Omega$ came into play in \cite{LaiYin2017}. However, their approaches seem to be difficult to apply directly in catching the lifespan estimates for solutions to \eqref{Equation_2} due to technical issues. More recently, Sobajima \cite{Sobajima2019-1} (see also Ikeda-Sobajima \cite{IkedaSobajima2019-1}) established the following upper bound estimates for lifespan of solutions to \eqref{Equation_2}:
\begin{equation}\label{Lifespan_Single-Eq}
{\rm LifeSpan}(u)\lesssim
\begin{cases}
\ep^{-\frac{p-1}{2-p}}\Big(\log \big(\ep^{-1}\big)\Big)^{\frac{p-1}{2-p}} &\text{ if }\ \ 1<p<2 \text{ and }d=2, \\
\exp\exp\Big(C\ep^{-1}\Big) &\text{ if }\ \ p=2 \text{ and }d=2, \\
\ep^{-\frac{2(p-1)}{2-d(p-1)}} &\text{ if }\ \ 1<p<p_{\rm Fuj}(d) \text{ and }d\ge 3, \\
\exp\Big(C\ep^{-(p-1)}\Big) &\text{ if }\ \ p=p_{\rm Fuj}(d) \text{ and }d\ge 3. \\
\end{cases}
\end{equation}
At first sight, these estimates are exactly the same as those for the corresponding Cauchy problem \eqref{Equation_1} in higher dimensions $d\ge 3$ but the difference comes from the case of $d=2$, especially, the double exponential type appears in the critical exponent $d=2$. This different behavior can be interpreted as the influence of recurrency of the Brownian motion in two-dimensional case. Regarding the Robin initial-boundary value problem for the single equation in an exterior domain, i.e. \eqref{Equation_Main} with $\mathtt{k}=1$, very recently Ikeda-Jleli-Samet \cite{IkedaJleliSamet2020} have investigated both the existence and nonexistence of gloabl solutions to the following problem:
\begin{equation}\label{Equation_3}
\begin{cases}
\partial^2_t u(t,x)- \Delta u(t,x)+ \partial_t u(t,x)= |u(t,x)|^{p}, &(t,x) \in (0,T) \times \Omega, \\
\f{\partial u_\ell}{\partial n^+}(t,x)+\beta u_\ell(t,x)=f(x), &(t,x) \in (0,T) \times \partial\Omega, \\
u(0,x)= u_0(x),\quad \partial_t u(0,x)= u_1(x), &x \in \Omega,
\end{cases}
\end{equation}
with a nontrivial Robin boundary condition (see also \cite{JleliSamet2019} and the references therein), i.e. $\beta>0$ and a function $f\not\equiv 0$. In their paper, one should recognize that the information about estimating lifespan of solutions has been not obtained to \eqref{Equation_3} even for a zero Robin boundary condition. Additionally, the fact is that at present there is not any result for blow-up of damped wave equation with the Neumann boundary condition, in particular, for \eqref{Equation_3} with $\beta=0$ as far as we know. \medskip

To the best of the authors' knowledge, no work in terms of the study of the lifespan estimates for solutions to \eqref{Equation_Main} exists in the literature so far even when this system consists two components, i.e. \eqref{Equation_Main} with $\mathtt{k}=2$. Motivated strongly by \cite{ChenDao2021,IkedaSobajima2019-1,IkedaJleliSamet2020}, our main goal of this paper is to indicate not only the blow-up of solutions but also the sharp upper bound of lifespan to \eqref{Equation_Main} for any $\mathtt{k}\ge 2$ in two and higher spatial dimensions. For this purpose, we appropriately determine the test function method, which approximates the harmonic functions enjoying the Dirichlet boundary condition or the Robin boundary condition on $\partial \Omega$ for $d=2$ and for any $d\ge 3$ individually, as well as effectively apply the technical derivation of lifespan estimates modified from \cite{IkedaSobajima2019-2,IkedaSobajima2019-1,ChenDao2021}. Nevertheless, as we can see later (Section \ref{Sec_Proof_Upper}), our strategy dealing with $d\ge 3$ does not work so well to explore the special case $d=2$. Hence, considering $d=2$ as an exceptional case of \eqref{Equation_Main} we will discuss the treatment of this case in the other approach. Moreover, we would say that this paper seems to be the first result to investigate the blow-up phenomenon for the system \eqref{Equation_Main}, even for the single damped wave equation, with Neumann boundary condition. For this kind of boundary condition, we want to point out that it is enough to use the unique harmonic function to treat for all spatial dimension. Finally, such kind of these results in one-dimensional case will also be remarked at the end of this work. \medskip

\textbf{The structure of this paper is organized as follows:} We state the main results including local well-posedness in the energy space, small data blow-up result and upper bound estimate for lifespan of solutions in Section \ref{Sec.Main}. Section \ref{Sec.Theorem-1} is to present the proof of local well-posedness result in the energy space. We give some of preliminary calculations of test functions in Section \ref{Test_functions} that are used in the sequel. Finally, Section \ref{Sec_Proof_Upper} is devoted to the proof of small data solution blow-up and upper bound estimate for lifespan of solutions simultaneously.\medskip

\noindent\textbf{Notations:} We give the following notations which are used throughout this paper.
\begin{itemize}[leftmargin=*]
\item We write $f\lesssim g$ when there exists a constant $C>0$ such that $f\le Cg$, and $f \sim g$ when $g\lesssim f\lesssim g$.
\item Let us denote the matrix
$$ P:=
\begin{pmatrix}
0&0&\cdots&0&p_1 \\
p_2&0&\cdots&0&0 \\
0&p_3&\cdots&0&0 \\
\vdots &\vdots& &\vdots &\vdots \\
0&0&\cdots&p_{\mathtt{k}}&0 \\
\end{pmatrix} $$
and the column vector
\begin{equation} \label{Parameter_gamma}
\gamma= (\gamma_1,\gamma_2,\cdots,\gamma_{\mathtt{k}})^{\mathtt{t}}:=(P-I_{\mathtt{k}})^{-1}(\underbrace{1,1,\cdots,1}_{\mathtt{k} \text{ times}})^{\mathtt{t}},
\end{equation}
where $I_{\mathtt{k}}$ and $(\alpha_1,\alpha_2,\cdots,\alpha_{\mathtt{k}})^{\mathtt{t}}$ stand for the identity matrix and the transposition vector of vector $(\alpha_1,\alpha_2,\cdots,\alpha_{\mathtt{k}})$, respectively. We want to point out that due to the assumption $p_\ell>1$ with $\ell=1,2,\cdots,\mathtt{k}$, it is obvious to recognize that
$$ \text{det}(P-I_{\mathtt{k}})= (-1)^{\mathtt{k}+1}\left(\prod^{\mathtt{k}}_{\ell=1} p_\ell-1\right) \neq 0. $$
So, the inverse matrix $(P-I_{\mathtt{k}})^{-1}$ exists. This means the above vector $\gamma$ is well-defined. Then, we set the element $\gamma_{\max}:= \max\{\gamma_1,\gamma_2,\cdots,\gamma_{\mathtt{k}}\}$.
\item
Let $m \in \mathbb{N}\cup\{0\}$ and $p\in [1,\infty]$. We introduce the Sobolev space $W^{m,p}(\Omega)$, which is a Banach space of measurable functions $f \colon \Omega \to \mathbb{C}$ such that $D^{\alpha}f \in L^{p}(\Omega)$ in the sense of distributions, for every multi-index $\alpha\in (\mathbb{N}\cup\{0\})^n$ with $|\alpha| \leq m$. Here $D:=(\partial_{x_1},\partial_{x_2},\cdots,\partial_{x_n})$ is a partial differential operator. The space $W^{m,p}(\Omega)$ is equipped with the norm $\|\cdot\|_{W^{m,p}(\Omega)}$ given by 
$$\|f\|_{W^{m,p}(\Omega)} := \dps\sum_{|\alpha| \leq m}\|D^{\alpha}f\|_{L^p(\Omega)}. $$
Let us denote by $W_{0}^{m,p}(\Omega)$ a closure of $\mathcal{C}_{0}^{\infty}(\Omega)$ in $W^{m,p}(\Omega)$, where $\mathcal{C}_{0}^{\infty}(\Omega)$ is the space of functions in $\mathcal{C}^{\infty}(\Omega)$ which has a compact support in $\Omega$.
\item For $m\in \mathbb{N}\cup\{0\}$, $H^{m}(\Omega)$ stands for $W^{m,2}(\Omega)$, where $H^{m}(\Omega)$ is equipped with the equivalent norm $\|\cdot\|_{H^m(\Omega)}$ given by
$$\|f\|_{H^m(\Omega)} := \left(\dps\sum_{|\alpha| \leq m}\int_{\Omega}|D^{\alpha}f(x)|^2 dx\right)^{\frac{1}{2}}. $$
Then $H^{m}(\Omega)$ is a Hilbert space endowed with the scalar product $\langle\cdot,\cdot\rangle_{H^m(\Omega)}$ given by
$$ \left<f,g\right>_{H^m(\Omega)} := \dps\sum_{|\alpha| \leq m}\Re\int_{\Omega}D^{\alpha}f(x)\overline{D^{\alpha}g(x)}dx. $$
Furthermore, $H_{0}^{m}(\Omega) := W_{0}^{m,2}(\Omega)$.

\item For $\alpha,\beta\in \mathbb{R}$ satisfying $\alpha\beta\ne 0$, we introduce a closed subspace $H^1_{\alpha,\beta}(\Omega)$ of the Sobolev space $H^1(\Omega)$ associated with the boundary condition given by
\[
   H^1_{\alpha,\beta}(\Omega):=
   \begin{cases}
   H^1_0(\Omega), &\text{if}\ \ \alpha=0,\\
   \Big\{f\in H^1(\Omega)\ :\ \partial f/\partial n^+\in L^2(\partial\Omega)\ \text{ with }\ \alpha\partial f/\partial n^++\beta f=0\Big\},&\text{if}\ \ \alpha\ne 0.
   \end{cases}
\]

\item The precise definition of the Laplace operator depending on the boundary condition in the domain $\Omega$ is as follows: We introduce a linear operator $\mathcal{A}=\mathcal{A}_{\alpha,\beta}$ with the domain $D(\mathcal{A})$ associated with the boundary condition in $L^2(\Omega)$ defined by
\begin{align}
\label{Laplacian}
D(\mathcal{A})&:=\Big\{f\in H^1_{\alpha,\beta}(\Omega)\,:\,\Delta f\in L^2(\Omega)\Big\}, \\
\mathcal{A}f&:=\Delta f \quad \text{for}\ \ f\in D(\mathcal{A}).\notag
\end{align}
We write $\mathcal{A}$ as $\Delta=\Delta^{\alpha,\beta}$ and call $\mathcal{A}$ to be the Dirichlet Laplacian if $\alpha=0$, the Neumann Laplacian if $\beta=0$ and the Robin Laplacian otherwise. It is known that $\mathcal{A}$ is $m$-dissipative operator with the dense domain if $\alpha\beta\ge 0$ (see \cite[Definition 2.2.2 and Proposition 2.6.1]{CaHa98} and \cite{ArendtUrban10}, for example).

\item We define a Hilbert space $E$ as
$$E=E_{\alpha,\beta}=E_{\alpha,\beta}(\Omega):=\big(H_{\alpha,\beta}^1(\Omega)\big)^{\mathtt{k}}\times \big(L^2(\Omega)\big)^{\mathtt{k}}$$
with $\mathtt{k}\ge 2$, which is called the energy space to the initial-boundary value problem (\ref{Equation_Main}).
\item 
Let $m \in \mathbb{N}\cup\{0\}$. For an interval $I \subset \mathbb{R}$, we introduce the space $\mathcal{C}^{m}(I,X)$ of $m$-times continuously differentiable functions from $I$ to $X$ with respect to the topology in $X$.
\item
Let $p\in [1,\infty]$. We denote by $p'$ the H\"older conjugate of $p$. Moreover, for an interval $I=[0,T)\subset \mathbb{R}$ with $T>0$, we define the Lebesgue space $L^p(0,T;X)$ of all measurable functions from $I$ to $X$ endowed with the norm $\|\cdot\|_{L^p(0,T;X)}$ given by 
$$ \| u\| _{L^{p}\left(0,T;X\right)}:=\big\|\,\| u(t,\cdot)\| _{X}\big\|_{L^{p}(I)}. $$
\end{itemize}

\section{Main results}\label{Sec.Main}
In this section, we state our two main results. The first one (Theorem \ref{LWP_Prop}) gives local well-posedness to the initial-boundary value problem (\ref{Equation_Main}) in the energy space $E$ for an arbitrary data in $E$ in the case of $p_{\ell}\in [1,d/(d-2)]$ with $d\ge 3$ and $p_{\ell}\in [1,\infty)$ with $d=1,2$ for $\ell=1,2,\cdots,\mathtt{k}$. Here we say that well-posedness to (\ref{Equation_Main}) holds if existence, uniqueness of the solution and continuous dependence on the initial data are valid. The second one (Theorem \ref{Upper_Bound_Thoerem}) gives a small data blow-up result and upper estimates for the lifespan of the small solutions to the problem (\ref{Equation_Main}) for a suitable data when $\gamma_{\max}\ge d/2$ and $d\ge 2$.

\subsection{Large data local well-posedness in the energy space}

In order to state the large data local well-posedness result to the initial-boundary value problem (\ref{Equation_Main}), we convert the original problem (\ref{Equation_Main}) into the following form:
\begin{equation}\label{Equation_Vector}
\begin{cases}
\partial_tU(t,x)-\mathcal{B}_{\alpha,\beta}U(t,x)=\mathcal{N}\big(U\big)(t,x), & (t,x)\in (0,T) \times \Omega,\\
U(0,x)= \varepsilon U_0(x), &x\in \Omega,
\end{cases}
\end{equation}
where ${\bm u}:(0,T)\times \Omega\rightarrow {\bm u}(t,x):=\big(u_1(t,x),u_2(t,x),\cdots,u_{\mathtt{k}}(t,x)\big)$ stands for the $\mathtt{k}$-tuple of the unknown functions, and
\begin{align*}
U:(0,T)\times\Omega\rightarrow U(t,x):=& \big(\bm{u}(t,x),\partial_t\bm{u}(t,x)\big)^{\mathtt{t}} \\
=& \big(u_1(t,x),u_2(t,x),\cdots,u_{\mathtt{k}}(t,x), \partial_tu_1(t,x),\partial_tu_2(t,x),\cdots,\partial_tu_{\mathtt{k}}(t,x)\big)^{\mathtt{t}}
\end{align*}
is a new unknown function. Moreover, $\bm{u}_0:\Omega\rightarrow \bm{u}_0(x):=\big(u_{0,1}(x),u_{0,2}(x)\cdots,u_{0,\mathtt{k}}(x)\big)$ denotes the $\mathtt{k}$-tuple of the initial displacement and $\bm{u}_1:\Omega\rightarrow \bm{u}_1(x):=\big(u_{1,1}(x),u_{1,2}(x),\cdots,u_{1,\mathtt{k}}(x)\big)$ denotes the $\mathtt{k}$-tuple of the initial velocity, and $$U_0:\Omega\rightarrow U_0(x):=\big(\bm{u}_0(x),\bm{u}_1(x)\big)^{\mathtt{t}}$$
is a new given initial function. The linear operator $\mathcal{B}_{\alpha,\beta}$ on the energy space $E_{\alpha,\beta}$ is defined by
\begin{equation}
\label{m-disipative op}
\mathcal{B}=\mathcal{B}_{\alpha,\beta}:=
   \begin{pmatrix}
   0 & 1\\
   \Delta_{\alpha,\beta} & 0\\
   \end{pmatrix}
\end{equation}
with the dense domain
$$ D(\mathcal{B}):=\left\{(\bm{u},\bm{v})\in E_{\alpha,\beta}\,:\,\Delta_{\alpha,\beta}\bm{u}\in \big(L^2(\Omega)\big)^{\mathtt{k}},\bm{v}\in \big(H^1_{\alpha,\beta}(\Omega)\big)^{\mathtt{k}}\right\}, $$
where $\Delta_{\alpha,\beta}$ is the Laplace operator defined by (\ref{Laplacian}) dependent on the boundary condition.
Finally, the nonlinear mapping $\mathcal{N}$ is given by
\[
  \mathcal{N}(U):=\Big(0,0,\cdots,0,-\partial_tu_1+|u_{\mathtt{k}}|^{p_1},-\partial_tu_2+|u_1|^{p_2},\cdots,-\partial_tu_{\mathtt{k}}+|u_{\mathtt{k}-1}|^{p_{\mathtt{k}}}\Big)^{\mathtt{t}}.
\]
We remark that the original problem (\ref{Equation_Main}) is equivalent to the problem (\ref{Equation_Vector}) through the relation
$$U=\Big(u_1,u_2,\cdots,u_{\mathtt{k}}, \partial_tu_1,\partial_tu_2,\cdots,\partial_tu_{\mathtt{k}}\Big)^{\mathtt{t}}. $$
The $m$-dissipativity of the operator $\mathcal{B}$ is known in the following lemma.
\begin{lemma}[$m$-dissipativity of $\mathcal{B}$]
\label{m-disipativity}
Let $\alpha\beta\ge 0$. Then the operator $\mathcal{B}$ defined by \eqref{m-disipative op} is $m$-dissipative with the dense domain $D(\mathcal{B})$ in $E$. Moreover, $\mathcal{B}$ generates a contraction semigroup $\left\{e^{t\mathcal{B}}\right\}_{t\ge 0}$ on $E$.
\end{lemma}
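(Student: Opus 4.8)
The plan is to prove $m$-dissipativity of $\mathcal{B}$ by reducing it to the known $m$-dissipativity of the scalar Laplacian $\mathcal{A}=\Delta_{\alpha,\beta}$ on $L^2(\Omega)$ (recalled in the Notations, citing \cite{CaHa98,ArendtUrban10}), and then invoking the Lumer–Phillips theorem. Since $E=\big(H^1_{\alpha,\beta}(\Omega)\big)^{\mathtt{k}}\times\big(L^2(\Omega)\big)^{\mathtt{k}}$ is a product of $\mathtt{k}$ identical copies of the single-equation energy space, it suffices to establish everything for $\mathtt{k}=1$ and take direct sums; the coupling in \eqref{Equation_Main} sits entirely in the nonlinearity $\mathcal{N}$, not in $\mathcal{B}$, so $\mathcal{B}$ is block-diagonal with $\mathtt{k}$ identical blocks. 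Thus I will phrase the argument for one block $\mathcal{B}_0=\begin{pmatrix}0&1\\\Delta_{\alpha,\beta}&0\end{pmatrix}$ on $H^1_{\alpha,\beta}(\Omega)\times L^2(\Omega)$.

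First I would record the correct inner product on the single-block energy space: for $(u,v),(w,z)$ one uses
$\lr{(u,v),(w,z)}_{E_0}=\Re\int_\Omega \nabla u\cdot\overline{\nabla w}\,dx+\Re\int_\Omega v\,\overline{z}\,dx$
(plus, in the Robin case $\beta\ne 0$ with $\alpha\ne0$, a boundary term $\tfrac{\beta}{\alpha}\Re\int_{\partial\Omega}u\overline w\,dS$ with the appropriate sign so that it is nonnegative when $\alpha\beta\ge 0$ — this is exactly why the hypothesis $\alpha\beta\ge 0$ is needed and where I expect the only real subtlety). One must check this is an equivalent Hilbert norm on $H^1_{\alpha,\beta}(\Omega)\times L^2(\Omega)$; for the Dirichlet and Neumann cases it is the standard $H^1\times L^2$ norm, while in the Robin case the added boundary term is controlled by trace estimates. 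Second, I would verify dissipativity: for $U=(u,v)\in D(\mathcal{B}_0)$ with $v\in H^1_{\alpha,\beta}(\Omega)$ and $\Delta u\in L^2(\Omega)$,
$\Re\lr{\mathcal{B}_0 U,U}_{E_0}=\Re\int_\Omega\nabla v\cdot\overline{\nabla u}\,dx+\Re\int_\Omega(\Delta u)\overline v\,dx\;(+\text{boundary term})$.
Using Green's formula $\int_\Omega(\Delta u)\overline v\,dx=-\int_\Omega\nabla u\cdot\overline{\nabla v}\,dx+\int_{\partial\Omega}\tfrac{\partial u}{\partial n^+}\overline v\,dS$ and the boundary condition $\alpha\,\partial u/\partial n^++\beta u=0$, the interior terms cancel and the boundary contributions combine with the boundary term in the inner product to give $\Re\lr{\mathcal{B}_0U,U}_{E_0}\le 0$ (identically $0$, in fact — $\mathcal{B}_0$ is skew-adjoint-like, the damping lives in $\mathcal{N}$). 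Here the sign convention in the Robin inner-product term must be chosen precisely so that the surviving boundary term is $-\tfrac{\beta}{\alpha}\Re\int_{\partial\Omega}|u|^2\,dS$-type and hence $\le 0$ exactly when $\alpha\beta\ge 0$.

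Third, I would verify the range condition: for some (equivalently all) $\lambda>0$, $\mathrm{Range}(\lambda I-\mathcal{B}_0)=E_0$. Given $(f,g)\in E_0$, solving $(\lambda I-\mathcal{B}_0)(u,v)=(f,g)$ means $\lambda u-v=f$ and $-\Delta u+\lambda v=g$; eliminating $v=\lambda u-f$ yields $\lambda^2 u-\Delta u=g+\lambda f$, i.e. $(\lambda^2 I-\mathcal{A})u=g+\lambda f$. Since $\mathcal{A}=\Delta_{\alpha,\beta}$ is $m$-dissipative on $L^2(\Omega)$, $\lambda^2 I-\mathcal{A}$ is surjective from $D(\mathcal{A})$ onto $L^2(\Omega)$, and $g+\lambda f\in L^2(\Omega)$ (indeed $f\in H^1_{\alpha,\beta}$, $g\in L^2$), so a unique $u\in D(\mathcal{A})\subset H^1_{\alpha,\beta}$ exists; then $v=\lambda u-f\in H^1_{\alpha,\beta}(\Omega)$ since both $u$ and $f$ lie there, and one checks $\Delta u\in L^2$, giving $(u,v)\in D(\mathcal{B}_0)$. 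Density of $D(\mathcal{B}_0)$ in $E_0$ follows from density of $D(\mathcal{A})$ in $L^2$ and of $H^1_{\alpha,\beta}$ in itself, together with $D(\mathcal{A})\times H^1_{\alpha,\beta}\subset D(\mathcal{B}_0)$. By Lumer–Phillips, $\mathcal{B}_0$ is $m$-dissipative and generates a contraction $C_0$-semigroup on $E_0$; taking the $\mathtt{k}$-fold direct sum gives the assertion for $\mathcal{B}$ on $E$, with $e^{t\mathcal{B}}=\big(e^{t\mathcal{B}_0}\big)^{\oplus\mathtt{k}}$. The main obstacle is purely the bookkeeping of the Robin boundary term — fixing the sign in the energy inner product so that both equivalence of norms and the $\le 0$ dissipativity estimate hold precisely under $\alpha\beta\ge 0$ — and the justification of Green's identity for $u\in D(\mathcal{A})$ on the exterior domain, which I would handle by the standard trace/density argument (cf. \cite{CaHa98}).
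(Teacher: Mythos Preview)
Your overall strategy---reduce to a single block, verify dissipativity via Green's formula and the boundary condition, establish the range condition by inverting $\lambda^2 I-\mathcal A$, and conclude by Lumer--Phillips---is exactly the route the paper has in mind: it does not give a proof at all but simply refers to \cite[Proposition~2.6.9 and Theorem~3.4.4]{CaHa98}, which carry out precisely this argument for the wave operator with Dirichlet data. So at the level of approach you are aligned with the paper.

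There is, however, a genuine gap in your execution. You propose the inner product
\[
\lr{(u,v),(w,z)}_{E_0}=\Re\int_\Omega \nabla u\cdot\overline{\nabla w}\,dx+\Re\int_\Omega v\,\overline{z}\,dx
\]
and then assert that ``for the Dirichlet and Neumann cases it is the standard $H^1\times L^2$ norm''. This is false on the exterior domain $\Omega=\{|x|>1\}$: there is no Poincar\'e inequality, and one can build $u_n\in C^\infty_c(\Omega)$ with $\|u_n\|_{L^2}=1$ and $\|\nabla u_n\|_{L^2}\to 0$ (rescaled bumps supported in annuli of radius $\sim n$). Thus the bilinear form you wrote is only a seminorm on the first component and is \emph{not} equivalent to the $H^1\times L^2$ norm defining $E_0$. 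Your dissipativity computation (which yields exactly~$0$) is correct for that seminorm, but it does not transfer to the actual Hilbert space $E_0$. The Cazenave--Haraux argument you are emulating is stated for \emph{bounded} $\Omega$, where Poincar\'e does hold; on the exterior domain one must either (i)~work with the full $H^1$ inner product and accept that $\mathcal B_0-cI$ (for some $c>0$) is $m$-dissipative, yielding a quasi-contraction semigroup $\|e^{t\mathcal B_0}\|\le e^{ct}$, which is entirely sufficient for the local theory in Theorem~\ref{LWP_Prop}; or (ii)~argue more carefully via the spectral/functional calculus for the nonnegative self-adjoint operator $-\Delta_{\alpha,\beta}$ to produce an equivalent norm in which the wave group is unitary. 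Either fix is routine, but the step as you wrote it does not go through.
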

This lemma can be proved in the similar manner as the proof of \cite[Proposition 2.6.9]{CaHa98} and \cite[Theorem 3.4.4]{CaHa98}.

For $T\in (0,\infty]$, we define a solution space $E(T)$ to the problem (\ref{Equation_Vector}) as $E(T):=L^{\infty}(0,T;E)$, which is a Banach space endowed with the norm $\|\cdot\|_{L^{\infty}(0,T;E)}$.
Next we introduce the following notions of mild solution and lifespan of solutions to the problems (\ref{Equation_Main}) as well as (\ref{Equation_Vector}).
\begin{definition}[Mild solution]
\label{defmildsol}
We say that a function ${\bm u}=(u_1,u_2,\cdots,u_\mathtt{k})$ is a mild solution to (\ref{Equation_Main}) if ${\bm u}$ possesses the regularity
$${\bm u}\in\Big(\mathcal{C}\big([0,T),H^1_{\alpha,\beta}(\Omega)\big) \cap \mathcal{C}^1\big([0,T),L^2(\Omega)\big)\Big)^{\mathtt{k}}$$
and it satisfies the following integral equation:
\begin{equation}
\label{Integraleq}
   U(t)=\varepsilon e^{t\mathcal{B}}U_0+\int_0^te^{(t-\tau)\mathcal{B}}\mathcal{N}\big(U\big)(\tau)d\tau,
\end{equation}
which is associated with (\ref{Equation_Vector}), for $U_0\in E$ and for any $t\in [0,T)$. Additionally, we call $U$ belonging to the class $E(T)$ a mild solution to (\ref{Equation_Vector}) if ${\bm u}$ is a mild solution to (\ref{Equation_Main}).
\end{definition}

\begin{definition}[Lifespan]
\label{deflifespan}
We call the maximal existence time of the mild solution to (\ref{Equation_Main}) to be lifespan, which is denoted by $T_{\varepsilon}$, that is
\[
T_{\varepsilon}
:=\sup\Big\{T\in (0,\infty]\ :\ \text{There exists a unique mild solution ${\bm u}$ to (\ref{Equation_Main}) on $[0,T)$}\Big\}.
\]
\end{definition}
Now let us state the first main result concerning the local well-posedness to (\ref{Equation_Vector}) in the energy space $E$ as follows:
\begin{theorem} [Large data local well-posedness in the energy space]
\label{LWP_Prop}
Let $\alpha,\beta\in \mathbb{R}$ satisfying $\alpha\beta\ge 0$, $d\in \N$, $p_{\ell}\ge 1$ for $\ell=1,2,\cdots,\mathtt{k}$, $\varepsilon>0$ and $U_0\in E$. We assume that if $d\ge 3$, then $p_{\ell}\le d/(d-2)$ for $\ell=1,2,\cdots,\mathtt{k}$. Then the initial-boundary value problem \eqref{Equation_Main} is locally well-posed in the energy space $E$. More precisely, the following statements hold:
\begin{itemize}[leftmargin=*]
\item {\rm\textbf{Existence}}: There exists a positive time $T=T\Big(\varepsilon,\|U_0\|_E,\big\{p_{\ell}\big\}_{\ell=1}^{\mathtt{k}},d\Big)>0$ such that there exists a mild solution $U\in E(T)\cap \mathcal{C}([0,T);E)$ to the problem \eqref{Equation_Vector} on $[0,T)$.
\item {\rm\textbf{Uniqueness}}: Let $U\in E(T)$ be the solution to \eqref{Equation_Vector} obtained in the Existence part. Let $T_1\in (0,T)$ and $V\in E(T_1)$ be another mild solution to \eqref{Equation_Vector} with the same initial data $\varepsilon U_0$. Then the identity $U(t)=V(t)$ holds for any $t\in [0,T_1]$.
\item {\rm\textbf{Continuous dependence on initial data}}: The flow map $E\rightarrow E(T,M)$,\ $\varepsilon U_0\mapsto U$ is Lipschitz continuous, where $U$ is the mild solution to \eqref{Equation_Vector} with initial data $\varepsilon U_0$ and the subspace $E(T,M)$ is determined as in \eqref{Def_E(T,M)}.
\item {\rm\textbf{Blow-up alternative}}: If $T_{\varepsilon}<\infty$, then $\dps\lim_{t\rightarrow T_{\varepsilon}-0}\|U(t,\cdot)\|_{E}=\infty$.
\end{itemize}
The above existence and uniqueness results imply well-definedness of the lifespan $T_{\varepsilon}$ given in Definition \ref{deflifespan}, that is, $T_{\varepsilon}>0$.
\end{theorem}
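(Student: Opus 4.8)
The plan is to run the standard Banach fixed-point argument on the Duhamel formula \eqref{Integraleq}, using the contraction semigroup $\{e^{t\mathcal B}\}_{t\ge0}$ supplied by Lemma \ref{m-disipativity}. First I would fix $U_0\in E$ and a radius $M:=2\varepsilon\|U_0\|_E$, and define the complete metric space $E(T,M)$ (this is the set referenced in \eqref{Def_E(T,M)}) as the closed ball of radius $M$ in $E(T)=L^\infty(0,T;E)$, possibly intersected with $\mathcal C([0,T);E)$, equipped with the $L^\infty(0,T;E)$-distance. On this ball I would define the map $\Phi(U)(t):=\varepsilon e^{t\mathcal B}U_0+\int_0^t e^{(t-\tau)\mathcal B}\mathcal N(U)(\tau)\,d\tau$. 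Since $\|e^{t\mathcal B}\|_{E\to E}\le1$, the linear term has norm $\le\varepsilon\|U_0\|_E=M/2$, so everything reduces to estimating $\int_0^t\|\mathcal N(U)(\tau)\|_E\,d\tau$.

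The key step is the nonlinear estimate: $\mathcal N(U)$ has components of the form $-\partial_t u_j+|u_{j-1}|^{p_j}$, living in the velocity slot, so $\|\mathcal N(U)(\tau)\|_E\lesssim\sum_j\big(\|\partial_t u_j(\tau)\|_{L^2(\Omega)}+\big\||u_{j-1}(\tau)|^{p_j}\big\|_{L^2(\Omega)}\big)$. The first piece is bounded by $\|U(\tau)\|_E$; the second requires $\big\||v|^{p_j}\big\|_{L^2}=\|v\|_{L^{2p_j}}^{p_j}$, and here the hypothesis $p_\ell\le d/(d-2)$ for $d\ge3$ (resp.\ $p_\ell<\infty$ for $d=1,2$) enters: it guarantees $2p_j\le 2d/(d-2)$, the endpoint of the Sobolev embedding $H^1(\Omega)\hookrightarrow L^{2d/(d-2)}(\Omega)$, so that $\|v\|_{L^{2p_j}}\lesssim\|v\|_{H^1_{\alpha,\beta}(\Omega)}$. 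This Sobolev embedding on the exterior domain $\Omega$ (with the relevant boundary-adapted $H^1$ space) is the main technical point I would have to invoke — for the Dirichlet case it follows from the whole-space embedding by zero extension, while for the Neumann/Robin case one uses that $\Omega$ has the (extension) cone property. Consequently $\|\mathcal N(U)(\tau)\|_E\lesssim \|U(\tau)\|_E+\sum_j\|U(\tau)\|_E^{p_j}\le C(M+\sum_j M^{p_j})=:C_M$, hence $\|\Phi(U)(t)-\varepsilon e^{t\mathcal B}U_0\|_E\le T C_M$, and choosing $T$ small (depending on $\varepsilon,\|U_0\|_E,\{p_\ell\},d$) so that $TC_M\le M/2$ gives $\Phi:E(T,M)\to E(T,M)$.

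For the contraction and the remaining assertions: the difference $\mathcal N(U)-\mathcal N(V)$ involves $\partial_t(u_j-v_j)$ and $|u_{j-1}|^{p_j}-|v_{j-1}|^{p_j}$; the latter is controlled by the elementary inequality $\big||a|^{p}-|b|^{p}\big|\lesssim(|a|^{p-1}+|b|^{p-1})|a-b|$ followed by Hölder with exponents matched to the same Sobolev embedding, yielding $\|\mathcal N(U)(\tau)-\mathcal N(V)(\tau)\|_E\lesssim(1+M^{p_j-1})\|U(\tau)-V(\tau)\|_E$; shrinking $T$ further makes $\Phi$ a contraction, so Banach's theorem gives a unique fixed point $U\in E(T,M)$, which is the mild solution, and continuity in $t$ follows from strong continuity of the semigroup plus continuity of the Duhamel integral. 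Uniqueness in the whole class $E(T_1)$ (not just in the ball) is obtained by a standard connectedness/continuation argument: the set of times where $U=V$ is closed, nonempty, and open by a local contraction estimate on short intervals. The blow-up alternative follows by contradiction: if $\limsup_{t\to T_\varepsilon}\|U(t)\|_E<\infty$ one restarts the local existence from times near $T_\varepsilon$ with a uniform time-step, extending past $T_\varepsilon$ and contradicting maximality. Finally $T_\varepsilon>0$ is immediate from the existence part. The only genuinely delicate point, beyond bookkeeping, is ensuring the Sobolev embedding and the product estimates are valid on the exterior domain $\Omega$ with the boundary-condition-adapted space $H^1_{\alpha,\beta}(\Omega)$, and that $\mathcal N$ indeed maps $E$ into $E$ (in particular that $|u_{j-1}|^{p_j}\in L^2(\Omega)$), which is exactly what the restriction on $p_\ell$ secures.
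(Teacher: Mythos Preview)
Your proposal is correct and follows essentially the same route as the paper: a Banach fixed-point argument on the Duhamel map in the closed ball $E(T,M)$, with the nonlinear estimate driven by the Sobolev embedding $H^1(\Omega)\hookrightarrow L^{2p_\ell}(\Omega)$ (valid precisely under the hypothesis $p_\ell\le d/(d-2)$), and the contraction obtained from the pointwise inequality $\big||a|^{p}-|b|^{p}\big|\lesssim(|a|^{p-1}+|b|^{p-1})|a-b|$ combined with H\"older. The paper in fact only writes out the Existence part in detail and declares the remaining items standard, so your sketches for uniqueness, continuous dependence, and the blow-up alternative go slightly beyond what is recorded there.
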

\subsection{Blow-up and lifespan estimates for small data solutions}
Before indicating the blow-up result for small data solutions as well as the lifespan estimates, let us state the following definition of weak solutions to \eqref{Equation_Main}.
\begin{definition}[Weak solution] \label{Weak_solutions_Def}
Let $p_\ell>1$ with $\ell=1,2,\cdots,\mathtt{k}$ and $T>0$. A $\mathtt{k}$-tuple of functions $(u_1,u_2,\cdots,u_\mathtt{k})$ is called a weak solution to \eqref{Equation_Main} on $[0,T)$ if
\begin{align*}
(u_1,u_2,\cdots,u_\mathtt{k})\in &\Big(\mathcal{C}\big([0,T),H^1_0(\Omega)\big) \cap \mathcal{C}^1\big([0,T),L^2(\Omega)\big) \cap L^{p_2}_{\rm loc}\big([0,T) \times \overline{\Omega}\big)\Big) \\ 
&\quad \times \Big(\mathcal{C}\big([0,T),H^1_0(\Omega)\big) \cap \mathcal{C}^1\big([0,T),L^2(\Omega)\big) \cap L^{p_3}_{\rm loc}\big([0,T) \times \overline{\Omega}\big)\Big) \\
&\qquad \ddots \\
&\qquad\quad \times \Big(\mathcal{C}\big([0,T),H^1_0(\Omega)\big) \cap \mathcal{C}^1\big([0,T),L^2(\Omega)\big) \cap L^{p_1}_{\rm loc}\big([0,T) \times \overline{\Omega}\big)\Big)
\end{align*}
if $\alpha=0$ or
\begin{align*}
(u_1,u_2,\cdots,u_\mathtt{k})\in &\Big(\mathcal{C}\big([0,T),H^1(\Omega)\big) \cap \mathcal{C}^1\big([0,T),L^2(\Omega)\big) \cap L^{p_2}_{\rm loc}\big([0,T) \times \overline{\Omega}\big)\Big) \\ 
&\quad \times \Big(\mathcal{C}\big([0,T),H^1(\Omega)\big) \cap \mathcal{C}^1\big([0,T),L^2(\Omega)\big) \cap L^{p_3}_{\rm loc}\big([0,T) \times \overline{\Omega}\big)\Big) \\
&\qquad \ddots \\
&\qquad\quad \times \Big(\mathcal{C}\big([0,T),H^1(\Omega)\big) \cap \mathcal{C}^1\big([0,T),L^2(\Omega)\big) \cap L^{p_1}_{\rm loc}\big([0,T) \times \overline{\Omega}\big)\Big),
\end{align*}
if $\alpha\ne 0$, respectively, and moreover the following relations hold:
\begin{align}
&\int_0^T \int_{\Omega}|u_{\mathtt{k}}(t,x)|^{p_1} \Phi(t,x)dxdt+ \int_{\Omega} u_{1,1}(x)\Phi(0,x)dx \nonumber \\
&\quad = \int_0^T \int_{\Omega}\Big(\nabla u_1(t,x) \cdot \nabla \Phi(t,x)- \partial_t u_1(t,x) \partial_t \Phi(t,x)+ \partial_t u_1(t,x) \Phi(t,x) \Big)dxdt \label{weaksolution1}
\end{align}
and
\begin{align}
&\int_0^T \int_{\Omega}|u_\ell(t,x)|^{p_{\ell+1}} \Phi(t,x)dxdt+ \int_{\Omega} u_{1,\ell+1}(x)\Phi(0,x)dx \nonumber \\
&\quad = \int_0^T \int_{\Omega}\Big(\nabla u_{\ell+1}(t,x) \cdot \nabla \Phi(t,x)- \partial_t u_{\ell+1}(t,x) \partial_t \Phi(t,x)+ \partial_t u_{\ell+1}(t,x) \Phi(t,x) \Big)dxdt, \label{weaksolution2}
\end{align}
with $\ell=1,2,\cdots,\mathtt{k}-1$, for any test function $\Phi= \Phi(t,x) \in \mathcal{C}^2\big([0,T)\times \Omega\big)$ with ${\rm supp}\Phi \subset [0,T)\times \overline{\Omega}$ such that $\left(\alpha\f{\Phi}{\partial n^+}+\beta\Phi\right)(t,\cdot)\Big|_{\partial \Omega}=0$.
\end{definition}

\begin{lemma}[Relation between mild solution and weak solution]
We assume the same assumptions as in Theorem \ref{LWP_Prop}. Then $(u_1,u_2,\cdots,u_{\mathtt{k}})$ is a mild solution to \eqref{Equation_Main} on $[0,T)$ in the sense of Definition \ref{defmildsol} if and only if it is a weak solution to \eqref{Equation_Main} on $[0,T)$ in the sense of Definition \ref{Weak_solutions_Def}, respectively.
\end{lemma}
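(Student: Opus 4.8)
The plan is to establish the equivalence by the standard semigroup/duality argument, treating the two implications separately. First I would set up the notation carefully: recall that $\mathcal B=\mathcal B_{\alpha,\beta}$ generates a contraction semigroup on $E$ by Lemma \ref{m-disipativity}, and that under the hypotheses of Theorem \ref{LWP_Prop} the nonlinearity $\mathcal N$ maps the relevant space into $E$ (this uses the subcriticality $p_\ell\le d/(d-2)$ for $d\ge 3$ through the Sobolev embedding $H^1_{\alpha,\beta}(\Omega)\hookrightarrow L^{2p_\ell}(\Omega)$, together with $H^1_{\alpha,\beta}(\Omega)\hookrightarrow L^q(\Omega)$ for all $q\in[2,\infty)$ when $d=1,2$). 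Fix a mild solution $U=(\bm u,\partial_t\bm u)^{\mathtt t}$ satisfying the Duhamel formula \eqref{Integraleq} with the stated regularity.

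For the direction ``mild $\Rightarrow$ weak'', I would first prove the claim for data $U_0\in D(\mathcal B)$ and smoother nonlinear term, where $t\mapsto e^{t\mathcal B}U_0$ is a classical ($\mathcal C^1$ in $E$, valued in $D(\mathcal B)$) solution; here one differentiates \eqref{Integraleq}, obtains $\partial_t U-\mathcal B U=\mathcal N(U)$ in $E$ pointwise in $t$, which componentwise is exactly $\partial_t^2 u_\ell-\Delta u_\ell+\partial_t u_\ell=|u_{\ell-1}|^{p_\ell}$ (cyclically) in $L^2(\Omega)$. Then one multiplies by a test function $\Phi$ as in Definition \ref{Weak_solutions_Def}, integrates over $(0,T)\times\Omega$, and integrates by parts: the spatial integration by parts $\int_\Omega (\Delta u_\ell)\Phi = -\int_\Omega \nabla u_\ell\cdot\nabla\Phi + \int_{\partial\Omega}\frac{\partial u_\ell}{\partial n^+}\Phi$, where the boundary term vanishes — for $\alpha=0$ because $\Phi|_{\partial\Omega}=0$, and for $\alpha\ne 0$ because $\alpha\frac{\partial u_\ell}{\partial n^+}+\beta u_\ell=0$ and $\alpha\frac{\partial\Phi}{\partial n^+}+\beta\Phi=0$ force $\frac{\partial u_\ell}{\partial n^+}\Phi=\frac{\partial\Phi}{\partial n^+}u_\ell$, whose boundary integrals cancel against the symmetric Green's identity (this is precisely the self-adjointness encoded in $D(\mathcal A)$). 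The time integration by parts on $\int_0^T\int_\Omega(\partial_t^2 u_\ell)\Phi$ produces $-\int_0^T\int_\Omega \partial_t u_\ell\,\partial_t\Phi - \int_\Omega \partial_t u_\ell(0,x)\Phi(0,x)dx$ (the endpoint term at $t=T$ disappears since $\supp\Phi\subset[0,T)\times\overline\Omega$), and $\partial_t u_\ell(0,\cdot)=\varepsilon u_{1,\ell}$; rearranging gives exactly \eqref{weaksolution1}–\eqref{weaksolution2}. Finally I pass from $D(\mathcal B)$ data to general $U_0\in E$ by density: approximate $U_0$ in $E$, use the continuous dependence part of Theorem \ref{LWP_Prop} to get convergence of the corresponding mild solutions in $E(T)$, and pass to the limit in the (continuous, at most quadratic-in-the-solution) integrals of the weak formulation.

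For the converse ``weak $\Rightarrow$ mild'', I would argue that a weak solution $U$ with the stated regularity satisfies, for every $\Psi\in D(\mathcal B^*)$ and a.e. $t$, the identity $\frac{d}{dt}\langle U(t),\Psi\rangle_E=\langle U(t),\mathcal B^*\Psi\rangle_E+\langle \mathcal N(U)(t),\Psi\rangle_E$ — this is obtained by specializing the test function $\Phi$ in Definition \ref{Weak_solutions_Def} to products $\varphi(t)\psi(x)$ with $\psi$ ranging over a core and using that $\mathcal B$ is (essentially) skew-adjoint up to the dissipative part; equivalently $U$ is a weak solution of the abstract Cauchy problem \eqref{Equation_Vector} in the semigroup sense. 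Then the standard fact that weak (distributional) solutions of $\partial_t U=\mathcal B U+\mathcal N(U)$ with $\mathcal N(U)\in L^1_{\rm loc}(0,T;E)$ coincide with mild solutions (see, e.g., Ball's characterization, or \cite[Proposition 4.1.6]{CaHa98}) yields \eqref{Integraleq}, and the initial condition $U(0)=\varepsilon U_0$ is read off from the $\Phi(0,x)$ terms.

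I expect the main obstacle to be the careful justification of the vanishing boundary term in the Robin/Neumann case ($\alpha\ne0$): one must verify that the weak solution's normal trace $\frac{\partial u_\ell}{\partial n^+}$ exists in $L^2(\partial\Omega)$ and satisfies the boundary relation, which is exactly the content of membership in $H^1_{\alpha,\beta}(\Omega)$ and $D(\mathcal A)$, and that the Green identity $\langle \Delta f,g\rangle=\langle f,\Delta g\rangle$ is legitimate for $f\in D(\mathcal A)$, $g$ the (spatial part of the) test function — this is where the $m$-dissipativity and the precise definition \eqref{Laplacian} of $\Delta_{\alpha,\beta}$ do the real work. The density/approximation step in the first direction is also slightly delicate because the map $U\mapsto |u_\ell|^{p_{\ell+1}}$ is only locally Lipschitz, so one localizes in time to an interval on which the approximating solutions stay in a fixed ball of $E(T)$, which suffices since the weak formulation is tested against compactly-supported-in-time $\Phi$.
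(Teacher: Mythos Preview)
Your proposal is correct and is precisely the standard argument the paper has in mind: the paper itself gives no proof at all, writing only ``The proof of this lemma is standard, thus we omit its detail.'' Your outline---classical solutions for $U_0\in D(\mathcal B)$ plus density for the forward direction, and Ball's weak-equals-mild characterization (e.g.\ \cite[Proposition~4.1.6]{CaHa98}) for the converse---is exactly the expected route, and the reference you invoke is the same one already used in the paper for the semigroup framework.
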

The proof of this lemma is standard, thus we omit its detail.

The second main result is read as follows:
\begin{theorem}[Blow-up and lifespan estimates]
\label{Upper_Bound_Thoerem}
Let $d\ge 2$. With $\ell=1,2,\cdots,\mathtt{k}$, assume that the exponents $p_\ell>1$ fulfill the condition
\begin{equation} \label{Critical_Condition}
\max\{\gamma_1,\gamma_2,\cdots,\gamma_{\mathtt{k}}\}\ge \frac{d}{2},
\end{equation}
and the initial data $u_{0,\ell},\,u_{1,\ell}$ satisfy
\begin{equation}\label{Assumption1_Initial_data}
u_{0,\ell}\Psi+u_{1,\ell}\Psi \in L^1(\Omega) 
\end{equation}
as well as
\begin{equation}\label{Assumption2_Initial_data}
\int_{\Omega}\big(u_{0,\ell}(x)+ u_{1,\ell}(x)\big)\Psi(x)dx>0,
\end{equation}
where the function $\Psi= \Psi(x)$ is defined by
\begin{itemize}
\item $\beta\ne0$:
\begin{equation}\label{Test_function_x}
\Psi(x)=
\begin{cases}
\log|x|+\f{\alpha}{\beta} &\text{ if }\ \ d=2,\\
1-|x|^{2-d}+\f{\alpha}{\beta}(d-2) &\text{ if }\ \ d\ge 3,
\end{cases}
\end{equation}
\item $\beta=0$:
\begin{equation}\label{Test_function_x*}
\Psi(x)= 1\quad \text{ for all }d\ge 2.
\end{equation}
\end{itemize}
Then, there exist positive constants $\varepsilon_0>0$ and $C>0$ such that for any $\varepsilon \in (0,\varepsilon_0]$ the following upper bound estimates for the lifespan of weak solutions to \eqref{Equation_Main} hold:
\begin{itemize}[leftmargin=*]
\item $\beta\ne 0$:
\begin{align} 
&{\rm LifeSpan}(u_1,u_2,\cdots,u_{\mathtt{k}}) \nonumber \\
&\qquad \le
\begin{cases}
C\left(\ep^{-1}\log\big(\ep^{-1}\big)\right)^{(\max\{\gamma_1,\gamma_2,\cdots,\gamma_{\mathtt{k}}\}- 1)^{-1}} &\text{if}\ \ \max\{\gamma_1,\gamma_2,\cdots,\gamma_{\mathtt{k}}\}> 1, \ \ d=2, \\
\exp\exp\left(C\ep^{-1}\right) &\text{if}\ \ \max\{\gamma_1,\gamma_2,\cdots,\gamma_{\mathtt{k}}\}= 1, \ \ d=2, \\
&\quad\,\, p_1=p_2=\cdots=p_{\mathtt{k}}, \\
C\varepsilon^{-(\max\{\gamma_1,\gamma_2,\cdots,\gamma_{\mathtt{k}}\}- d/2)^{-1}} &\text{if}\ \ \max\{\gamma_1,\gamma_2,\cdots,\gamma_{\mathtt{k}}\}> d/2, \ \ d\ge 3, \\
\exp\left(C\varepsilon^{-(p_1p_2\cdots p_{\mathtt{k}}-1)}\right) &\text{if}\ \ \max\{\gamma_1,\gamma_2,\cdots,\gamma_{\mathtt{k}}\}= d/2,\ \ d\ge 3, \\
&\quad p_{j_1}\ne p_{j_2} \text{ with }j_1,j_2\in \{1,2,\cdots,\mathtt{k}\} \text{ and }j_1\ne j_2, \\
\exp\left(C\varepsilon^{-(p_1-1)}\right) &\text{if}\ \ \max\{\gamma_1,\gamma_2,\cdots,\gamma_{\mathtt{k}}\}= d/2, \ \ d\ge 3, \\
&\quad\,\, p_1=p_2=\cdots=p_{\mathtt{k}},
\end{cases} \nonumber
\end{align}
\item $\beta=0$:
\begin{align} 
&{\rm LifeSpan}(u_1,u_2,\cdots,u_{\mathtt{k}}) \nonumber \\
&\qquad \le
\begin{cases}
C\varepsilon^{-(\max\{\gamma_1,\gamma_2,\cdots,\gamma_{\mathtt{k}}\}- d/2)^{-1}} &\text{if}\ \ \max\{\gamma_1,\gamma_2,\cdots,\gamma_{\mathtt{k}}\}> d/2, \ \ d\ge 2, \\
\exp\left(C\varepsilon^{-(p_1p_2\cdots p_{\mathtt{k}}-1)}\right) &\text{if}\ \ \max\{\gamma_1,\gamma_2,\cdots,\gamma_{\mathtt{k}}\}= d/2,\ \ d\ge 2, \\
&\quad p_{j_1}\ne p_{j_2} \text{ with }j_1,j_2\in \{1,2,\cdots,\mathtt{k}\} \text{ and }j_1\ne j_2, \\
\exp\left(C\varepsilon^{-(p_1-1)}\right) &\text{if}\ \ \max\{\gamma_1,\gamma_2,\cdots,\gamma_{\mathtt{k}}\}= d/2, \ \ d\ge 2, \\
&\quad\,\, p_1=p_2=\cdots=p_{\mathtt{k}},
\end{cases} \nonumber
\end{align}
\end{itemize}
where $C$ is a positive constant independent of $\ep$.
\end{theorem}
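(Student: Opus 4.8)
The plan is to establish the blow-up result via the test function method, following the framework of \cite{ChenDao2021,IkedaSobajima2019-1}, adapted to the exterior domain and the several boundary conditions. The starting point is the weak formulation \eqref{weaksolution1}--\eqref{weaksolution2}. First I would fix a nonnegative cut-off function $\eta=\eta(t)\in\mathcal{C}^\infty_0([0,\infty))$ with $\eta\equiv 1$ on $[0,1/2]$ and $\eta\equiv 0$ on $[1,\infty)$, and for a large parameter $R>1$ set the test function $\Phi_R(t,x)=\eta_R(t)^{2p'}\Psi(x)\theta_R(|x|)$, where $\eta_R(t)=\eta(t/R)$ and $\theta_R$ is a spatial cut-off localizing to $|x|\le R$ (needed because $\Psi$ is unbounded; for $d\ge3$ with $\beta\ne0$ one has $\Psi$ bounded but still needs spatial truncation, and for $d=2$ the logarithm grows). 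The key point is that $\Psi$ defined in \eqref{Test_function_x}--\eqref{Test_function_x*} is harmonic in $\Omega$ and satisfies $\alpha\,\partial\Psi/\partial n^++\beta\Psi=0$ on $\partial\Omega$, so $\Phi_R$ is admissible and the boundary term vanishes; I would verify this harmonicity/boundary-compatibility as a preliminary lemma (this is what Section \ref{Test_functions} is for). Defining the functionals $I_\ell(R):=\int_0^T\int_\Omega|u_\ell|^{p_{\ell+1}}\Psi\,\eta_R^{2p'}\theta_R\,dx\,dt$ (indices mod $\mathtt{k}$), plugging $\Phi_R$ into the weak formulation and integrating by parts in $t$ moves derivatives onto $\eta_R$, yielding, after using $\partial_t u_\ell\cdot\Phi$ integrated by parts as well, a chain of inequalities of the form
\begin{equation*}
I_\ell(R)+\varepsilon\int_\Omega(u_{0,\ell+1}+u_{1,\ell+1})\Psi\theta_R\,dx\lesssim \Big(\text{space-time weight factors}\Big)^{1/p_{\ell+1}} I_{\ell+1}(R)^{1/p_{\ell+1}},
\end{equation*}
where the weight factors come from $|\partial_t\eta_R^{2p'}|$, $|\partial_t^2\eta_R^{2p'}|$ and from $\Delta(\Psi\theta_R)=2\nabla\Psi\cdot\nabla\theta_R+\Psi\Delta\theta_R$ (supported in the annulus $R/2\le|x|\le R$, hence contributing a factor $R^{d-2}$ up to logarithms in $d=2$), all multiplied by Hölder with $I_{\ell+1}$.

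\medskip

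The second step is to iterate this $\mathtt{k}$-fold chain. Composing the inequalities around the cycle $1\to 2\to\cdots\to\mathtt{k}\to 1$ and absorbing $I_\ell(R)$ after applying Young's inequality gives a closed estimate for a single $I_\ell(R)$, of the schematic form $I_\ell(R)\lesssim R^{\,\mu_\ell}$ (with a logarithmic correction when $d=2$), where the exponent $\mu_\ell$ is determined by solving the linear system that defines $\gamma$ in \eqref{Parameter_gamma} — indeed the matrix $P-I_{\mathtt{k}}$ and the all-ones vector in \eqref{Parameter_gamma} are exactly what records how the space-time scaling weights compound through the cycle, so $\mu_\ell = d-2-2\gamma_\ell\cdot(\text{something})$; more precisely, after optimizing, the relevant quantity is the sign of $\max_\ell\gamma_\ell - d/2$. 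When $\gamma_{\max}>d/2$ (subcritical), taking $R\to\infty$ along $T=\infty$ forces $I_\ell(\infty)<\infty$ with the lower bound coming from the positivity assumption \eqref{Assumption2_Initial_data} being incompatible as $\varepsilon\to0$ — more carefully, one keeps $R$ finite, bounds $R^{\mu_\ell}$ against $\varepsilon$ times the positive data integral, and reads off $R\lesssim \varepsilon^{-(\gamma_{\max}-d/2)^{-1}}$; since a global solution would allow arbitrarily large $R\le T$, this yields the stated lifespan bound. The critical case $\gamma_{\max}=d/2$ is the delicate one: here $\mu_\ell=0$ (up to logs), the naive iteration only gives $I_\ell(R)\lesssim 1$, and one must run a \emph{slicing}/bootstrap argument — splitting the time integral dyadically and feeding the improved bound back — together with the logarithmic factor from the annular region to extract the doubly-exponential bound when $d=2$ and $p_1=\cdots=p_{\mathtt{k}}$, or the single-exponential bounds otherwise. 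The split between $p_1=\cdots=p_{\mathtt{k}}$ and $p_{j_1}\ne p_{j_2}$ reflects whether the system behaves like $\mathtt{k}$ copies of the scalar critical equation (giving exponent $p_1-1$) or whether the product $p_1p_2\cdots p_{\mathtt{k}}$ governs the iteration.

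\medskip

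For the one-dimensional remark and for $d=2$ generally I would note, as the authors flag, that the $d\ge3$ strategy degrades because $\Psi=\log|x|$ is only slowly growing and the annular weight $\nabla\theta_R\cdot\nabla\Psi$ carries a $(\log R)^{-1}$ rather than a power gain; the fix is to choose $\theta_R$ and the exponent on $\eta_R$ more carefully (e.g. a logarithmic cut-off $\theta_R(|x|)=\min\{1,\log(R/|x|)/\log R\}$-type profile) so that the self-improving inequality closes, and then the lifespan bound picks up the extra $\log(\varepsilon^{-1})$ factor seen in the $d=2$, $\gamma_{\max}>1$ line. The main obstacle I anticipate is precisely the bookkeeping in this critical-case iteration: getting the annular error terms, the Hölder exponents $p'_\ell$, and the cyclic composition to align so that the exponent of $R$ (resp. of $\log R$) is exactly $0$ and the slicing produces the sharp double/single exponential — in particular verifying that the logarithmic gains are neither too weak (failing to close) nor wasted. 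A secondary technical point is justifying the integration by parts and the vanishing of boundary terms rigorously at the level of weak solutions with the given (exterior-domain, Robin/Neumann) regularity, but the preliminary lemma on $\Psi$ together with Definition \ref{Weak_solutions_Def} should make this routine.
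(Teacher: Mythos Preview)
Your overall architecture is right, but the paper's implementation differs in three places worth noting. (i) It does \emph{not} separate time and space: the cutoff is the single parabolically scaled function $\phi_R(t,x)=\big(\varphi((t^2+(|x|-1)^4)/R^4)\big)^{\lambda+2}$, together with its outer-shell variant $\phi_R^*$ supported on $R^4/2<t^2+(|x|-1)^4<R^4$. Lemma~\ref{Useful_Lemma} shows every derivative of $\Phi_R=\Psi\phi_R$ is bounded by $R^{-2}\Psi\,(\phi_R^*)^{\lambda/(\lambda+2)}$; since the shell already forces $|x|-1\lesssim R$, no separate spatial truncation $\theta_R$ is ever introduced and the $\nabla\Psi\cdot\nabla\theta_R$ annular term you worry about never appears. (ii) The critical case is \emph{not} handled by dyadic slicing. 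The paper sets $h(r)=\int|u_{\mathtt{k}-1}|^{p_{\mathtt{k}}}\Psi\,\phi_r^*\,d(t,x)$ and $\mathcal{H}(R)=\int_0^R h(r)\,r^{-1}dr$, shows $\mathcal{H}(R)\lesssim \mathcal{I}_R[u_{\mathtt{k}-1}]$ by the change of variable $\rho=(t^2+(|x|-1)^4)/r^4$, and reads the closed chain as the ODE inequality $\mathcal{H}(R)+C\varepsilon\lesssim (R\,\mathcal{H}'(R))^{1/(p_1\cdots p_{\mathtt{k}})}$; integrating over $[R_0,\sqrt{T_\varepsilon}]$ gives the exponential bound in one stroke. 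The combined cutoff is exactly what makes this ODE trick close, via $\phi_R^*\le \phi_R$ and $h(R)=R\,\mathcal{H}'(R)$. (iii) For the equal-exponent critical case $p_1=\cdots=p_{\mathtt{k}}=p^*$ the paper does not iterate the system at all: it \emph{sums} the $\mathtt{k}$ equations, uses $\sum_\ell|u_\ell|^{p^*}\ge C\big|\sum_\ell u_\ell\big|^{p^*}$, and invokes the scalar result of \cite{IkedaSobajima2019-1} for $u_1+\cdots+u_{\mathtt{k}}$; the $d=2$ logarithmic factor and the double exponential likewise come from an appendix lemma borrowed from that reference (Lemma~\ref{Useful_lemma_2D}), not from a logarithmic spatial cutoff. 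Your tensor-cutoff-plus-slicing route can in principle reach the same conclusions, but the paper's parabolic cutoff buys a cleaner critical-case argument and sidesteps the annular boundary-layer bookkeeping entirely.
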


\begin{remark}
By plugging $\mathtt{k}=1$ in Theorem \ref{Upper_Bound_Thoerem}, one may realize obviously that our main results from Theorem \ref{Upper_Bound_Thoerem} exactly coincide with (\ref{Lifespan_Single-Eq}) derived from \cite{Sobajima2019-1,IkedaSobajima2019-1} when the case of the Dirichlet boundary condition, i.e. $\alpha=0$, is considered.
\end{remark}

\begin{remark}
This remark is to underline that in the present paper we have succeeded not only in proving the local well-posedness in the energy space and the blow-up phenomenon of small data solutions but also in catching upper bound estimates for lifespan of solutions to \eqref{Equation_Main}. By using an appropriately weighted energy method, we will devote to our concern in investigating global existence results for solutions to \eqref{Equation_Main} in a forthcoming paper.
\end{remark}

\begin{remark}
As we can see in Theorem \ref{Upper_Bound_Thoerem}, an open problem, which should be recognized to explore in a further study, is to find out a suitable upper bound in estimating the lifespan of solutions to \eqref{Equation_Main} with $\beta\ne 0$ in two-dimensional case, where the critical case is of interest and the exponents $p_1,p_2,\cdots,p_{\mathtt{k}}$ are not necessarily equal.
\end{remark}

\section{Proof of Theorem \ref{LWP_Prop}}\label{Sec.Theorem-1}
In this section, we give a proof of local well-posedness in the energy space to the problems (\ref{Equation_Main}) (Theorem \ref{LWP_Prop}). For $U_0\in E$ and $T>0$, we introduce a nonlinear mapping $\mathcal{J}$ defined by
\begin{equation}
\label{Integral eq}
   \mathcal{J}[U](t):=\varepsilon e^{t\mathcal{B}}U_0+\int_0^te^{(t-\tau)\mathcal{B}}\mathcal{N}\big(U\big)(\tau)d\tau
\end{equation}
for $t\in [0,T)$. To construct a local mild solution to (\ref{Equation_Vector}), we will prove that $\mathcal{J}$ is a contraction mapping from a suitable closed set in $E(T)$ into itself. We recall the following Sobolev embedding.
\begin{lemma}[Sobolev embedding]
\label{Sobolev embedding}
Let $\Omega$ be an open subset of $\R^d$ which has a Lipschitz continous boundary. Let ``$p\in [1,d)$ and $q\in [p,pd/(d-p)]$'' or ``$p=d$ and $q\in [p,\infty)$''. Then the embedding $W^{1,p}(\Omega)\hookrightarrow L^q(\Omega)$ holds. 
\end{lemma}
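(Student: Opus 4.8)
The plan is to reduce the embedding on $\Omega$ to the classical Sobolev embedding on the whole space $\R^d$ by means of an extension operator, and then to recover the full range of exponents $q\in[p,pd/(d-p)]$ (respectively $q\in[p,\infty)$ when $p=d$) by interpolation. The Lipschitz hypothesis on $\partial\Omega$ enters only through the extension step.

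\textbf{Step 1 (Extension).} Since $\Omega$ has a Lipschitz continuous boundary, there exists a bounded linear extension operator $\mathcal{E}\colon W^{1,p}(\Omega)\to W^{1,p}(\R^d)$, that is, $\mathcal{E}f|_{\Omega}=f$ and $\|\mathcal{E}f\|_{W^{1,p}(\R^d)}\le C\|f\|_{W^{1,p}(\Omega)}$ with a constant $C=C(\Omega,d,p)$. This is precisely where the Lipschitz regularity of $\partial\Omega$ is used (the Calder\'on--Stein extension theorem); I would simply invoke it as a known fact rather than construct it.

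\textbf{Step 2 (Whole-space embedding).} On $\R^d$ the Gagliardo--Nirenberg--Sobolev inequality gives $W^{1,p}(\R^d)\hookrightarrow L^{p^*}(\R^d)$ with $p^*:=pd/(d-p)$ when $p\in[1,d)$, and $W^{1,d}(\R^d)\hookrightarrow L^{r}(\R^d)$ for every $r\in[d,\infty)$ when $p=d$. Composing with $\mathcal{E}$ and restricting back to $\Omega$ yields, for the top admissible exponent $s$ (namely $s=p^*$ if $p<d$, or any fixed finite $s\ge d$ if $p=d$),
\[
\|f\|_{L^{s}(\Omega)}\le\|\mathcal{E}f\|_{L^{s}(\R^d)}\le C\|\mathcal{E}f\|_{W^{1,p}(\R^d)}\le C\|f\|_{W^{1,p}(\Omega)}.
\]

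\textbf{Step 3 (Endpoint and interpolation).} The endpoint $q=p$ is immediate, since the definition of the $W^{1,p}$-norm already controls $\|f\|_{L^p(\Omega)}$. For an intermediate exponent $q\in(p,s)$ write $1/q=\theta/p+(1-\theta)/s$ with $\theta\in(0,1)$; H\"older's inequality then gives $\|f\|_{L^q(\Omega)}\le\|f\|_{L^p(\Omega)}^{\theta}\|f\|_{L^{s}(\Omega)}^{1-\theta}$, valid on any measure space, so combining with Step 2 and the trivial $L^p$ bound proves $W^{1,p}(\Omega)\hookrightarrow L^q(\Omega)$ for every $q$ in the stated range. The only genuinely nontrivial ingredient is the extension operator of Step 1; the remainder is the classical whole-space Sobolev inequality together with a routine $L^q$-interpolation, and in practice I would cite a standard reference (e.g. Adams, \emph{Sobolev Spaces}, or Brezis) for both.
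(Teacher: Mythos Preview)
Your sketch is correct and follows the standard route (Stein extension to $\R^d$, whole-space Gagliardo--Nirenberg--Sobolev, then $L^q$-interpolation). The paper itself does not give a proof of this lemma at all: it simply refers the reader to Adams' \emph{Sobolev Spaces} and to \cite{IkedaTaniguchiWakasugiArxiv}, so your argument is in fact more detailed than what the paper provides, and is precisely the content one would find in those references.
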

For the proof of this lemma, see \cite{Adams75} or \cite[Lemma 3.3]{IkedaTaniguchiWakasugiArxiv}.

Now we give a proof of Theorem \ref{LWP_Prop}.
\begin{proof}[Proof of Theorem \ref{LWP_Prop}]
We prove the Existence part only, since the other parts can be proved in a similar or standard manner. Let $M\ge 2\varepsilon \|U_0\|_E$. We take $T>0$ such as
$$
    T\le \frac{1}{4}\min\left\{1,\frac{1}{2C_*\sum_{\ell=1}^{\mathtt{k}}M^{p_{\ell}-1}}\right\},
$$
where $C_*>0$ is defined in (\ref{definitionC_*}) and (\ref{definitionC_*2}) below and independent of $T$. We introduce a closed ball $E(T,M)$ at the origin with radius $M$ in the Banach space $E(T)$ given by
\begin{equation} \label{Def_E(T,M)}
    E(T,M):=\left\{U\in E(T)\ :\ \|U\|_{E(T)}\le M\right\}
\end{equation}
with a metric $d_T:E(T)\times E(T)\rightarrow \R_{\ge 0}$ defined by
$$
  d_T(U_1,U_2):=\|U_1-U_2\|_{E(T)}.
$$
We prove that the nonlinear mapping $\mathcal{J}$ given by (\ref{Integral eq}) is a contraction mapping from $E(T,M)$ into itself. 
Let $U\in E(T,M)$. By the definitions of the nonlinear function $\mathcal{N}$ and the energy space $E$ and the Sobolev embedding $H^1(\Omega)\hookrightarrow L^{2p_{\ell}}(\Omega)$ (Lemma \ref{Sobolev embedding}) for $\ell=1,2,\cdots,\mathtt{k}$, the following estimates:
\begin{align}
    \|\mathcal{N}(U)(t,\cdot)\|_E&=\big\|\big(-\partial_tu_1(t,\cdot)+|u_{\mathtt{k}}(t,\cdot)|^{p_1},-\partial_tu_2(t,\cdot)+|u_1(t,\cdot)|^{p_2},\cdots,-\partial_tu_{\mathtt{k}}(t,\cdot)+|u_{\mathtt{k}-1}(t,\cdot)|^{p_{\mathtt{k}}}\big)\big\|_{\big(L^2(\Omega)\big)^{\mathtt{k}}}\notag\\
    &\le \sum_{\ell=1}^{\mathtt{k}}\big\|\partial_tu_{\ell}(t,\cdot)\big\|_{L^2(\Omega)}+\sum_{\ell=1}^{\mathtt{k}}\|u_{\ell-1}(t,\cdot)\|_{L^{2p_{\ell}}}^{p_{\ell}} \notag\\
    &\le \|U\|_{E(T)}+C_*\sum_{\ell=1}^{\mathtt{k}}\|U\|_{E(T)}^{p_{\ell}}\le M +C_*\sum_{\ell=1}^{\mathtt{k}}M^{p_{\ell}}
\label{definitionC_*}
\end{align}
hold for some positive constant $C_*$ independent of $T$, where we set $u_0:=u_{\mathtt{k}}$. By Lemma \ref{m-disipativity} and the estimates (\ref{definitionC_*}), one derives
\begin{align*}
    \|\mathcal{J}[U](t)\|_{E}&\le \big\|\varepsilon e^{t\mathcal{B}}U_0\big\|_{E}+\int_0^t\big\|e^{(t-\tau)\mathcal{B}}\mathcal{N}\big(U\big)(\tau)\big\|_Ed\tau
    \le \varepsilon\|U_0\|_E+T\sup_{t\in [0,T)}\|\mathcal{N}\big(U\big)(t,\cdot)\|_{E}\\
    &\le \frac{M}{2}+TM+C_*T\sum_{\ell=1}^{\mathtt{k}}M^{p_{\ell}}
    \le \frac{M}{2}+\frac{M}{4}+\frac{M}{8}\le M,
\end{align*}
which implies that the mapping $\mathcal{J}$ from $E(T,M)$ to itself is well-defined. Let $U,V\in E(T,M)$. We set
$$U:=\big(u_1,u_2,\cdots,u_{\mathtt{k}}, \partial_tu_1,\partial_tu_2,\cdots,\partial_tu_{\mathtt{k}}\big)\,\,\text{ and }\,\, V:=\big(v_1,v_2,\cdots,v_{\mathtt{k}}, \partial_tv_1,\partial_tv_2,\cdots,\partial_tv_{\mathtt{k}}\big).$$
By the definitions of the nonlinear function $\mathcal{N}$ and the energy space $E$, the H\"older inequality and the Sobolev embedding $H^1(\Omega)\hookrightarrow L^{2p_{\ell}}(\Omega)$ (Lemma \ref{Sobolev embedding}) for $\ell=1,2,\cdots,\mathtt{k}$, the following chain of the inequalities:
\begin{align}
    &\big\|\mathcal{N}(U)(t,\cdot)-\mathcal{N}(V)(t,\cdot)\big\|_E \notag\\
    &\qquad\le \big\|\big(-\partial_tu_1(t,\cdot)+\partial_tv_1(t,\cdot)\big)+\big(|u_{\mathtt{k}}(t,\cdot)|^{p_1}-|v_{\mathtt{k}}(t,\cdot)|^{p_1}\big)\big\|_{L^2(\Omega)}\notag\\
    &\qquad\quad +\big\|\big(-\partial_tu_2(t,\cdot)+\partial_tv_2(t,\cdot)\big)+\big(|u_1(t,\cdot)|^{p_2}-|v_1(t,\cdot)|^{p_2}\big)\big\|_{L^2(\Omega)}\notag\\
    &\qquad\qquad \ddots \notag\\
    &\qquad\qquad\quad +\big\|\big(-\partial_tu_{\mathtt{k}}(t,\cdot)+\partial_tv_{\mathtt{k}}(t,\cdot)\big)+\big(|u_{\mathtt{k}-1}(t,\cdot)|^{p_{\mathtt{k}}}-|v_{\mathtt{k}-1}(t,\cdot)|^{p_{\mathtt{k}}}\big)\big\|_{L^2(\Omega)}\notag\\
    &\qquad\le \sum_{\ell=1}^{\mathtt{k}}\big\|\partial_tu_{\ell}(t,\cdot)-\partial_tv_{\ell}(t,\cdot)\big\|_{L^2(\Omega)}+\sum_{\ell=1}^{\mathtt{k}}\big\||u_{\ell-1}(t,\cdot)|^{p_{\ell}}-|v_{\ell-1}(t,\cdot)|^{p_{\ell}}\big\|_{L^2(\Omega)}\notag \\
    &\qquad\le \|U-V\|_{E(T)}+C\sum_{\ell=1}^{\mathtt{k}}\left(\|u_{\ell-1}(t,\cdot)\|_{L^{2p_{\ell}}(\Omega)}^{p_{\ell}-1}+\|v_{\ell-1}(t,\cdot)\|_{L^{2p_{\ell}}(\Omega)}^{p_{\ell}-1}\right)\|u_{\ell-1}(t,\cdot)-v_{\ell-1}(t,\cdot)\|_{L^{2p_{\ell}}(\Omega)}\notag \\
    &\qquad\le d_T(U,V)+C_*\sum_{\ell=1}^{\mathtt{k}}\Big(\|U\|_{E(T)}^{p_{\ell}-1}+\|V\|_{E(T)}^{p_{\ell}-1}\Big)\|U-V\|_{E(T)}\notag \\
    &\qquad\le d_T(U,V)+2C_*\sum_{\ell=1}^{\mathtt{k}}M^{p_{\ell}-1}d_T(U,V)
\label{definitionC_*2}
\end{align}
holds, where we set $v_0:=v_{\mathtt{k}}$. By Lemma \ref{Sobolev embedding} and the inequality (\ref{definitionC_*2}), we obtain
\begin{align*}
    d_T\big(\mathcal{J}[U],\mathcal{J}[V]\big)&=\sup_{t\in [0,T)}\big\|\mathcal{J}[U](t)-\mathcal{J}[V](t)\big\|_E
    \le
    \sup_{t\in [0,T)}\int_0^t\Big\|e^{(t-\tau)\mathcal{B}}\Big(\mathcal{N}\big(U\big)(\tau)-\mathcal{N}\big(V\big)(\tau)\Big)\Big\|_Ed\tau\\
    &\le T\sup_{t\in [0,T)}\big\|\mathcal{N}(U)(t,\cdot)-\mathcal{N}(V)(t,\cdot)\big\|_E
    \le Td_T(U,V)+2C_*T\sum_{\ell=1}^{\mathtt{k}}M^{p_{\ell}-1}d_T(U,V)\\
    &\le \frac{1}{2}d_T(U,V),
\end{align*}
which implies that the mapping $\mathcal{J}$ is a contraction mapping. By the contraction mapping principle, we see that there exists a unique function $U\in E(T,M)$ such that the identity $\mathcal{J}[U](t)=U(t)$ holds for any $t\in [0,T)$. By a standard argument, we can prove that $U\in \mathcal{C}([0,T);E)$, which completes the proof of the theorem.
\end{proof}

\section{Proof of Theorem \ref{Upper_Bound_Thoerem}}\label{Sec.Theorem-2}

\subsection{Test function method} \label{Test_functions}
At first, let us introduce a test function $\varphi=\varphi(\rho)$ having the following properties:
\begin{align*}
\varphi\in \mathcal{C}_0^{\infty}\big([0,\infty)\big)\ \ \mbox{and}\ \ \varphi(\rho):=
\begin{cases}
1&\mbox{if}\ \ \rho\in[0,1/2],\\
\mbox{decreasing}&\mbox{if}\ \ \rho\in(1/2,1),\\
0&\mbox{if}\ \ \rho\in[1,\infty).
\end{cases}
\end{align*}
Then, another test function $\varphi^*=\varphi^*(\rho)$ is given by
\begin{align*}
 \varphi^*(\rho):=
\begin{cases}
0&\mbox{if}\ \ \rho\in[0,1/2),\\
\varphi(\rho)&\mbox{if}\ \ \rho\in[1/2,\infty).
\end{cases}
\end{align*}
Let $R\in(0,\infty)$ be a large parameter. We introduce two test functions $\phi_R=\phi_R(t,x)$ and $\phi_R^*=\phi_R^*(t,x)$ as follows:
\begin{align*}
\phi_R(t,x):=\left(\varphi\left(\frac{t^2+(|x|-1)^4}{R^4}\right)\right)^{\lambda+2}\ \ \mbox{and}\ \ \phi_R^*(t,x):=\left(\varphi^*\left(\frac{t^2+(|x|-1)^4}{R^4}\right)\right)^{\lambda+2}
\end{align*}
with a positive constant $\lambda$, which will be fixed later. In addition, we define two notations
\begin{align*}
Q_R:= &\left\{(t,x) \in (0,T)\times \Omega \,:\, t^2+(|x|-1)^4 < R^4 \right\}, \\ 
Q^*_R:= &\left\{(t,x) \in (0,T)\times \Omega \,:\, \frac{R^4}{2}< t^2+(|x|-1)^4 < R^4 \right\}.
\end{align*}
\par The following useful lemma comes into play in our proof in the next section.
\begin{lemma}\label{Useful_Lemma}
The following estimates hold for any $(t,x) \in Q_R$:
\begin{align*}
{\rm (i)}&\qquad \big|\partial_t \phi_R(t,x)\big| \lesssim R^{-2}\big(\phi_R^*(t,x)\big)^{\frac{\lambda+1}{\lambda+2}}, \\
{\rm (ii)}&\qquad \big|\partial^2_t \phi_R(t,x)\big| \lesssim R^{-4}\big(\phi_R^*(t,x)\big)^{\frac{\lambda}{\lambda+2}}, \\
{\rm (iii)}&\qquad \big|\Delta \phi_R(t,x)\big| \lesssim R^{-2}\big(\phi_R^*(t,x)\big)^{\frac{\lambda}{\lambda+2}}.
\end{align*}
Moreover, by taking $\Psi= \Psi(x)$ as in \eqref{Test_function_x} or \eqref{Test_function_x*} we have the further estimate as follows:
\begin{align*}
{\rm (iv)}&\qquad \big|\Delta \big(\Psi(x) \phi_R(t,x)\big)\big| \lesssim R^{-2}\Psi(x)\big(\phi_R^*(t,x)\big)^{\frac{\lambda}{\lambda+2}}.
\end{align*}
\end{lemma}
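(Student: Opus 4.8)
The strategy is a direct computation based on the chain rule and the scaling structure of $\phi_R$, exploiting that the argument of $\varphi$ is $s := (t^2 + (|x|-1)^4)/R^4$ and that $s \sim 1$ on the support of any derivative of $\varphi$ (i.e. on $Q_R^*$). First I would record the elementary fact that since $\varphi'$ and $\varphi''$ are smooth and supported in $[1/2,1]$, any derivative falling on $\varphi(s)$ produces a factor that is bounded and supported where $t^2+(|x|-1)^4 \sim R^4$; in particular $t \lesssim R^2$ and $(|x|-1)^2 \lesssim R^2$ there. For (i), applying $\partial_t$ to $\phi_R = (\varphi(s))^{\lambda+2}$ yields $(\lambda+2)(\varphi(s))^{\lambda+1}\varphi'(s)\cdot \tfrac{2t}{R^4}$; bounding $|\varphi'(s)| \lesssim 1$, using $|t|\lesssim R^2$ to get $|2t/R^4|\lesssim R^{-2}$, and recognizing that $(\varphi(s))^{\lambda+1}=(\varphi^*(s))^{\lambda+1}$ on the relevant region (because $s\ge 1/2$ there, where $\varphi=\varphi^*$) gives the bound $R^{-2}(\phi_R^*)^{(\lambda+1)/(\lambda+2)}$. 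The exponent $(\lambda+1)/(\lambda+2)$ is exactly $(\varphi^*(s))^{\lambda+1}$ written as a power of $\phi_R^* = (\varphi^*(s))^{\lambda+2}$.

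For (ii), differentiating once more produces three types of terms: $(\varphi(s))^{\lambda}(\varphi'(s))^2 (t/R^4)^2$, $(\varphi(s))^{\lambda+1}\varphi''(s)(t/R^4)^2$, and $(\varphi(s))^{\lambda+1}\varphi'(s)/R^4$. In each, I would use $|t|^2/R^4 \lesssim 1$ on the support, so the worst decay is $R^{-4}$ coming from the last term (the $1/R^4$ factor), while the first two carry $R^{-8}\cdot t^2 \lesssim R^{-4}\cdot (t^2/R^4)\lesssim R^{-4}$; collecting the lowest power of $\varphi$, namely $\lambda$, and rewriting as $(\phi_R^*)^{\lambda/(\lambda+2)}$ gives (iii)'s companion bound. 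For (iii), $\Delta\phi_R$ in terms of $s$ requires $\nabla s$ and $\Delta s$: one computes $\nabla_x s = \tfrac{4(|x|-1)^3}{R^4}\tfrac{x}{|x|}$ and $\Delta_x s = \tfrac{1}{R^4}\big(12(|x|-1)^2 + \tfrac{4(d-1)}{|x|}(|x|-1)^3\big)$. On the support of $\varphi'$ or $\varphi''$ we have $(|x|-1)^4\lesssim R^4$, hence $|\nabla s|^2 \lesssim (|x|-1)^6/R^8 \lesssim (|x|-1)^2/R^4\cdot (|x|-1)^4/R^4\cdot R^{-0}\lesssim R^{-2}$ after using $(|x|-1)^2\lesssim R^2$, and similarly $|\Delta s|\lesssim R^{-2}$ (the $(|x|-1)^2/R^4 \le R^{-2}$ term dominates, noting $|x|>1$ so $1/|x|\le 1$). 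Assembling $\Delta\phi_R = (\lambda+2)(\varphi)^{\lambda+1}\varphi'\Delta s + (\lambda+2)(\lambda+1)(\varphi)^{\lambda}(\varphi')^2|\nabla s|^2 + (\lambda+2)(\varphi)^{\lambda+1}\varphi''|\nabla s|^2$ and taking the lowest power $\lambda$ of $\varphi$ yields $R^{-2}(\phi_R^*)^{\lambda/(\lambda+2)}$.

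The only genuinely new point is (iv), where $\Psi$ is unbounded (it grows like $\log|x|$ in $d=2$ and tends to a constant for $d\ge 3$, plus the constant shift $\alpha/\beta$; for $\beta=0$, $\Psi\equiv 1$). Here I would use that $\Psi$ is harmonic in $\Omega$ with respect to the relevant Laplacian, so $\Delta(\Psi\phi_R) = \Psi\Delta\phi_R + 2\nabla\Psi\cdot\nabla\phi_R + (\Delta\Psi)\phi_R = \Psi\Delta\phi_R + 2\nabla\Psi\cdot\nabla\phi_R$. The first term is controlled by (iii) multiplied by $\Psi$, which is exactly the claimed form. The cross term is the crux: $\nabla\Psi$ decays, being $\tfrac{1}{|x|}\tfrac{x}{|x|}$ in $d=2$ and $(d-2)|x|^{1-d}\tfrac{x}{|x|}$ in $d\ge 3$ (and $0$ for $\beta=0$), while $|\nabla\phi_R|\lesssim R^{-2}(\phi_R^*)^{(\lambda+1)/(\lambda+2)}$ by the same computation as in (i)--(iii); so $|\nabla\Psi\cdot\nabla\phi_R|\lesssim |x|^{-1}R^{-2}(\phi_R^*)^{(\lambda+1)/(\lambda+2)}$. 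The main obstacle is then to absorb this into $R^{-2}\Psi(\phi_R^*)^{\lambda/(\lambda+2)}$: since $\phi_R^*\le 1$ we may drop the extra fractional power, and it remains to show $|x|^{-1}\lesssim \Psi(x)$ on $Q_R^*$. For $\beta=0$ this is trivial as $\Psi\equiv 1$ and $|x|>1$. For $\beta\ne 0$, on $Q_R^*$ one has $|x|-1\lesssim R$, but more importantly $|x|-1$ can be small; however $\Psi$ stays bounded below by a positive constant away from $|x|=1$ only if the geometry is right — the clean route is to note $|x|>1$ forces $|x|^{-1}<1$, while for $d\ge3$, $\Psi(x)=1-|x|^{2-d}+\tfrac{\alpha}{\beta}(d-2)$ is increasing in $|x|$ and, with the sign conventions $\alpha\beta\ge0$, bounded below by a positive constant as $|x|\to\infty$ yet vanishes as $|x|\to1$ when $\alpha=0$ — so in the Dirichlet-adjacent cases one instead pairs the $|x|^{-1}$ factor with $(|x|-1)$-type decay already present in $\nabla\phi_R$ near the boundary. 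I would therefore refine the estimate of $|\nabla\phi_R|$ near $|x|=1$ to carry an extra factor $(|x|-1)^3$ (coming from $\nabla s$), which beats both $|x|^{-1}$ and the logarithmic/vanishing behaviour of $\Psi$, completing (iv). Splitting $Q_R^*$ into a region near the boundary (where the $(|x|-1)^3$ gain is used) and a region bounded away from it (where $\Psi \gtrsim 1$) makes this rigorous.
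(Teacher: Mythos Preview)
Your treatment of (i)--(iii) is essentially the paper's proof: direct chain-rule computation, support of $\varphi',\varphi''$ forcing $(t,x)\in Q_R^*$, and the bounds $t\lesssim R^2$, $(|x|-1)\lesssim R$ there. (You in fact keep the radial term $\tfrac{d-1}{|x|}\partial_r$ in $\Delta_x s$, which the paper's displayed formula for $\Delta\phi_R$ silently drops; your bound $\tfrac{(|x|-1)^3}{|x|}\le(|x|-1)^2\le R^2$ handles it.)

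For (iv) your route is correct but genuinely different from the paper's. Both start from $\Delta(\Psi\phi_R)=\Psi\Delta\phi_R+2\nabla\Psi\cdot\nabla\phi_R$ (harmonicity of $\Psi$) and reduce the cross term to controlling $\tfrac{(|x|-1)^3}{|x|^{d-1}R^4}$ by $R^{-2}\Psi(x)$. You propose splitting $Q_R^*$ into a near-boundary region, where the extra $(|x|-1)^3$ in $\nabla\phi_R$ compensates the vanishing of $\Psi$ (since $\Psi\sim c(|x|-1)$ there in the Dirichlet case), and a region bounded away from $\partial\Omega$, where $\Psi\gtrsim 1$. The paper instead avoids any splitting by the single global inequality
\[
\frac{|x|-1}{|x|}\le\log|x|\quad(d=2),\qquad \frac{|x|-1}{|x|^{d-1}}\le 1-|x|^{2-d}\quad(d\ge 3,\ \text{equivalently }|x|^{d-1}+1\ge 2|x|),
\]
valid for all $|x|\ge 1$, which together with $(|x|-1)^2\le R^2$ and $\alpha/\beta\ge 0$ gives $\tfrac{(|x|-1)^3}{|x|^{d-1}}\le R^2\Psi(x)$ in one stroke. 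The paper's device is cleaner and uniform in $|x|$; your region-splitting is more hands-on but lands at the same estimate.
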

\begin{proof}
First of all, a direct calculation leads to
\begin{align*}
	\partial_t\phi_R(t,x)&= \frac{2(\lambda+2)}{R^4}t\left(\varphi\left(\frac{t^2+(|x|-1)^4}{R^4}\right)\right)^{\lambda+1}\varphi'\left(\frac{t^2+(|x|-1)^4}{R^4}\right),\\
	\partial_t^2\phi_R(t,x)&= \frac{2(\lambda+2)}{R^4}\left(\varphi\left(\frac{t^2+(|x|-1)^4}{R^4}\right)\right)^{\lambda+1}\varphi'\left(\frac{t^2+(|x|-1)^4}{R^4}\right)\\
	&\quad+ \frac{4(\lambda+1)(\lambda+2)}{R^8}t^2\left(\varphi\left(\frac{t^2+(|x|-1)^4}{R^4}\right)\right)^{\lambda}\left(\varphi'\left(\frac{t^2+(|x|-1)^4}{R^4}\right)\right)^2\\
	&\quad+ \frac{4(\lambda+2)}{R^8}t^2\left(\varphi\left(\frac{t^2+(|x|-1)^4}{R^4}\right)\right)^{\lambda+1}\varphi''\left(\frac{t^2+(|x|-1)^4}{R^4}\right),
\end{align*}
and
\begin{align*}
	\nabla \phi_R(t,x)&= \frac{4(\lambda+2)}{R^4}(|x|-1)^3\frac{x}{|x|}\left(\varphi\left(\frac{t^2+(|x|-1)^4}{R^4}\right)\right)^{\lambda+1}\varphi'\left(\frac{t^2+(|x|-1)^4}{R^4}\right),\\
	\Delta \phi_R(t,x)&= \frac{12(\lambda+2)}{R^4}(|x|-1)^2 \left(\varphi\left(\frac{t^2+(|x|-1)^4}{R^4}\right)\right)^{\lambda+1}\varphi'\left(\frac{t^2+(|x|-1)^4}{R^4}\right)\\
	&\quad+ \frac{16(\lambda+1)(\lambda+2)}{R^8}(|x|-1)^6\left(\varphi\left(\frac{t^2+(|x|-1)^4}{R^4}\right)\right)^{\lambda}\left(\varphi'\left(\frac{t^2+(|x|-1)^4}{R^4}\right)\right)^2\\
	&\quad+ \frac{16(\lambda+2)}{R^8}(|x|-1)^6\left(\varphi\left(\frac{t^2+(|x|-1)^4}{R^4}\right)\right)^{\lambda+1}\varphi''\left(\frac{t^2+(|x|-1)^4}{R^4}\right).
\end{align*}
Thanks to the auxiliary properties
\begin{align*}
	\varphi'\left(\frac{t^2+(|x|-1)^4}{R^4}\right)\not\equiv0,\ \ \varphi''\left(\frac{t^2+(|x|-1)^4}{R^4}\right)\not\equiv0 \ \ \mbox{and}\ \  |x|-1\le R
\end{align*}
for any $(t,x) \in Q^*_R$, we may conclude the estimates from ${\rm (i)}$ to ${\rm (iii)}$. Next, in order to verify the last estimate, one observes that
\begin{align*}
\Delta \big(\Psi(x) \phi_R(t,x)\big) &= \Delta \Psi(x) \phi_R(t,x)+ \nabla \Psi(x) \cdot \nabla \phi_R(t,x)+ \Psi(x) \Delta \phi_R(t,x)\\ 
&= \nabla \Psi(x) \cdot \nabla \phi_R(t,x)+ \Psi(x) \Delta \phi_R(t,x)
\end{align*}
because of the fact $\Delta \Psi(x)=0$. Let us divide our consideration into two cases as follows:
\begin{itemize}[leftmargin=*]
\item[$\bullet$] If $\beta\ne 0$, then we take $\Psi(x)$ as in \eqref{Test_function_x}. By noticing that
$$\,\,\nabla \Psi(x)= 
\begin{cases}
\f{x}{|x|^2} &\text{ if }\ \ d=2, \\
(d-2)\f{x}{|x|^d} &\text{ if }\ \ d\ge 3,
\end{cases}$$
we derive
$$ \nabla \Psi(x) \cdot \nabla \phi_R(t,x)=
\begin{cases}
\dps\frac{4(\lambda+2)}{R^4}\cdot\frac{(|x|-1)^3}{|x|}\left(\varphi\left(\frac{t^2+(|x|-1)^4}{R^4}\right)\right)^{\lambda+1}\varphi'\left(\frac{t^2+(|x|-1)^4}{R^4}\right) &\text{ if }\ \ d=2, \\
\\
\dps\frac{4(d-2)(\lambda+2)}{R^4}\cdot\frac{(|x|-1)^3}{|x|^{n-1}}\left(\varphi\left(\frac{t^2+(|x|-1)^4}{R^4}\right)\right)^{\lambda+1}\varphi'\left(\frac{t^2+(|x|-1)^4}{R^4}\right) &\text{ if }\ \ d\ge 3.
\end{cases} $$
Thus, it implies
$$ \big|\nabla \Psi(x) \cdot \nabla \phi_R(t,x)\big|\lesssim R^{-2}\Psi(x)\big(\phi_R^*(t,x)\big)^{\frac{\lambda+1}{\lambda+2}} $$
by applying the elementary inequalities
$$ \begin{cases}
1-\f{1}{|x|}\le \log|x| &\text{ if }\ \ d=2, \\
|x|^{d-1}+1\ge 2|x| &\text{ if }\ \ d\ge 3,
\end{cases}$$
for any $|x|\ge 1$. 
\item[$\bullet$] If $\beta= 0$, then we take $\Psi(x)$ as in \eqref{Test_function_x*}. Obviously, one sees that $\nabla \Psi(x)= 0$, which entails immediately
$$\Delta \big(\Psi(x) \phi_R(t,x)\big)= \Psi(x) \Delta \phi_R(t,x). $$
\end{itemize}
Finally, using again the estimate ${\rm (iii)}$, it is clear to obtain the estimate ${\rm (iv)}$. Hence, our proof is complete.
\end{proof}

In the following proof, we will utilize the test functions as well as the notations defined in Section \ref{Test_functions}.

\subsection{Proof of Theorem \ref{Upper_Bound_Thoerem}} \label{Sec_Proof_Upper}
At first, we introduce the following test function:
$$ \Phi_R= \Phi_R(t,x):= \Psi(x) \phi_R(t,x), $$
which enjoys the conditions
$$ \Phi_R \in \mathcal{C}^2\big([0,T)\times \Omega\big), \quad {\rm supp}\Phi_R \subset [0,T)\times \overline{\Omega}\quad \text{ and }\quad \left(\alpha\f{\Phi_R}{\partial n^+}+\beta\Phi_R\right)(t,\cdot)\Big|_{\partial \Omega}=0. $$
We define the following functionals with $\ell=1,2,\cdots,\mathtt{k}-1$:
$$ \mathcal{I}_R[u_\ell]:= \int_0^{T}\int_{\Omega} |u_\ell(t,x)|^{p_{\ell+1}} \Phi_R(t,x)\,dxdt= \int_{Q_R}|u_\ell(t,x)|^{p_{\ell+1}} \Phi_R(t,x)\,dxdt $$
and
$$ \mathcal{I}_R[u_{\mathtt{k}}]:= \int_0^{T}\int_{\Omega} |u_{\mathtt{k}}(t,x)|^{p_1} \Phi_R(t,x)\,dxdt= \int_{Q_R}|u_{\mathtt{k}}(t,x)|^{p_1} \Phi_R(t,x)\,dxdt. $$
Let us assume that $(u_1,u_2,\cdots,u_\mathtt{k})= \big(u_1(t,x),u_2(t,x),\cdots,u_\mathtt{k}(t,x)\big)$ is a weak solution to \eqref{Equation_Main} in the sense of Definition \ref{Weak_solutions_Def}. By plugging $\Phi(t,x)= \Phi_R(t,x)$ into (\ref{weaksolution1}) and (\ref{weaksolution2}), taking integration by parts then we obtain
\begin{align}
&\mathcal{I}_R[u_{\mathtt{k}}]+ \varepsilon \int_{\Omega}\big(u_{0,1}(x)+u_{1,1}(x)\big)\Phi_R(0,x)dx \nonumber \\
&\qquad\quad= \int_{Q_R}u_1(t,x)\left(\partial_t^2\Phi_R(t,x)-\Delta\Phi_R(t,x)-\partial_t\Phi_R(t,x)\right)d(t,x), \label{proof.1}
\end{align}
and
\begin{align}
&\mathcal{I}_R[u_\ell]+ \varepsilon \int_{\Omega}\big(u_{0,\ell+1}(x)+u_{1,\ell+1}(x)\big)\Phi_R(0,x)dx \nonumber \\
&\qquad\quad= \int_{Q_R}u_{\ell+1}(t,x)\left(\partial_t^2\Phi_R(t,x)-\Delta\Phi_R(t,x)-\partial_t\Phi_R(t,x)\right)d(t,x) \label{proof.2}
\end{align}
with $\ell=1,2,\cdots,\mathtt{k}-1$, respectively. By the assumption (\ref{Assumption1_Initial_data}), the Lebesgue convergence theorem implies that
\begin{align*}
\lim_{R\to \ity}\int_{\Omega}\big(u_{0,\ell}(x)+u_{1,\ell}(x)\big)\Phi_R(0,x)dx &= \lim_{R\to \ity}\int_{\Omega}\big(u_{0,\ell}(x)+u_{1,\ell}(x)\big)\Psi(x)\phi_R(0,x)dx \\ 
&= \int_{\Omega}\big(u_{0,\ell}(x)+u_{1,\ell}(x)\big)\Psi(x)dx
\end{align*}
for any $\ell=1,2,\cdots,\mathtt{k}$. Together with the conditions \eqref{Assumption1_Initial_data} and \eqref{Assumption2_Initial_data} for the initial data, we deduce that there exists a sufficiently large constant $R_0$ so that it holds
\begin{align*}
\int_{\Omega}\big(u_{0,\ell}(x)+u_{1,\ell}(x)\big)\Phi_R(0,x)dx &\ge C^0_\ell
\end{align*}
for any $R>R_0$, where $C^0_\ell$ are suitable positive, small constants with $\ell=1,2,\cdots,\mathtt{k}$. Thus, it entails immediately from \eqref{proof.1} and \eqref{proof.2} that
$$ \mathcal{I}_R[u_{\mathtt{k}}]+ C^0_1 \varepsilon \le \int_{Q_R}u_1(t,x)\left(\partial_t^2\Phi_R(t,x)-\Delta\Phi_R(t,x)-\partial_t\Phi_R(t,x)\right)d(t,x) $$
and
$$ \mathcal{I}_R[u_\ell]+ C^0_{\ell+1} \varepsilon \le \int_{Q_R}u_{\ell+1}(t,x)\left(\partial_t^2\Phi_R(t,x)-\Delta\Phi_R(t,x)-\partial_t\Phi_R(t,x)\right)d(t,x) $$
with $\ell=1,2,\cdots,\mathtt{k}-1$. Employing Lemma \ref{Useful_Lemma} gives the following estimate:
\begin{align*}
\left|\partial_t^2\Phi_R(t,x)-\Delta\Phi_R(t,x)-\partial_t\Phi_R(t,x)\right| &= \left|\Psi(x)\partial_t^2\phi_R(t,x)- \Delta\big(\Psi(x)\phi_R(t,x)\big)- \Psi(x)\partial_t\phi_R(t,x)\right| \\
&\lesssim R^{-2}\Psi(x)\big(\phi_R^*(t,x)\big)^{\frac{\lambda}{\lambda+2}}
\end{align*}
due to the relation $0<\phi_R^*(t,x)< 1$ for any $R>R_0$. Hence, one achieves
$$ \mathcal{I}_R[u_{\mathtt{k}}]+ C^0_1 \varepsilon \lesssim R^{-2}\int_{Q_R}|u_1(t,x)|\,\Psi(x)\, \big(\phi_R^*(t,x)\big)^{\frac{\lambda}{\lambda+2}}d(t,x). $$
The application of H\"{o}lder inequality implies
\begin{align*}
\mathcal{I}_R[u_{\mathtt{k}}]+ C^0_1 \varepsilon &\lesssim R^{-2}\left(\int_{Q_R^*}\Psi(x)d(t,x)\right)^{\frac{1}{p'_2}}\left(\int_{Q_R}|u_1(t,x)|^{p_2}\,\Psi(x)\,(\phi_R^*(t,x))^{\frac{p_2\lambda}{\mu+2}}d(t,x)\right)^{\frac{1}{p_2}} \\
&\lesssim \Theta_{p_2}(R)\left(\dps\int_{Q_R}|u_1(t,x)|^{p_2}\,\Psi(x)\,(\phi_R^*(t,x))^{\frac{p_2\lambda}{\lambda+2}}d(t,x)\right)^{\frac{1}{p_2}},
\end{align*}
where the function $\Theta_{p_2}=\Theta_{p_2}(R)$ is defined by
\begin{equation}\label{Auxiliary_Funct}
\Theta_{p_2}(R)=
\begin{cases}
R^{2-\frac{4}{p_2}}(\log R)^{1-\frac{1}{p_2}} &\text{ if } \ \ d=2\,\,\text{ and }\,\,\beta\ne 0, \\
R^{d-\frac{d+2}{p_2}} &\text{ if } \ \ d\ge 3\,\,\text{ and }\,\,\beta\ne 0, \\
R^{d-\frac{d+2}{p_2}} &\text{ if } \ \ d\ge 2\,\,\text{ and }\,\,\beta= 0.
\end{cases}
\end{equation}
Here we notice that to derive the previous inequality, we have used the following estimate:
$$ \int_{Q_R^*}\Psi(x)d(t,x)\lesssim 
\begin{cases}
R^2(R+1)^2\log(R+1)\approx R^4\log R &\text{ if }\ \ d=2\,\,\text{ and }\,\,\beta\ne 0, \\
R^2(R+1)^d\approx R^{d+2} &\text{ if }\ \ d\ge 3\,\,\text{ and }\,\,\beta\ne 0, \\
R^2(R+1)^d\approx R^{d+2} &\text{ if } \ \ d\ge 2\,\,\text{ and }\,\,\beta= 0.
\end{cases}$$
for any $R>R_0$. Carrying out a similar way one finds
\begin{align*}
\mathcal{I}_R[u_1]+ C^0_2 \varepsilon &\lesssim \Theta_{p_3}(R)\left(\int_{Q_R}|u_2(t,x)|^{p_3}\,\Psi(x)\,(\phi_R^*(t,x))^{\frac{p_3\lambda}{\lambda+2}}d(t,x)\right)^{\frac{1}{p_3}}, \\
\mathcal{I}_R[u_2]+ C^0_3 \varepsilon &\lesssim \Theta_{p_4}(R)\left(\int_{Q_R}|u_3(t,x)|^{p_4}\,\Psi(x)\,(\phi_R^*(t,x))^{\frac{p_4\lambda}{\lambda+2}}d(t,x)\right)^{\frac{1}{p_4}}, \\
\quad \vdots & \nonumber \\
\mathcal{I}_R[u_{\mathtt{k}-1}]+ C^0_\mathtt{k} \varepsilon &\lesssim \Theta_{p_1}(R)\left(\int_{Q_R}|u_\mathtt{k}(t,x)|^{p_1}\,\Psi(x)\,(\phi_R^*(t,x))^{\frac{p_1\lambda}{\lambda+2}}d(t,x)\right)^{\frac{1}{p_1}}.
\end{align*}
Let us now choose the parameter $\lambda$ fulfilling
$$ \lambda\geqslant \max_{1\le \ell\le \mathtt{k}} \frac{2}{p_\ell-1}= \frac{2}{\dps\min_{1\le \ell\le \mathtt{k}}p_\ell-1}, \quad \text{ i.e. }\quad \min_{1\le \ell\le \mathtt{k}}p_\ell \frac{\lambda}{\lambda +2}\geqslant 1 $$
so that we may arrive at the following relations:
\begin{align}
\mathcal{I}_R[u_{\mathtt{k}}]+ C^0_1 \varepsilon &\lesssim \Theta_{p_2}(R)\left(\int_{Q_R}|u_1(t,x)|^{p_2}\,\Psi(x)\,\phi_R^*(t,x)d(t,x)\right)^{\frac{1}{p_2}}, \nonumber \\
\mathcal{I}_R[u_1]+ C^0_2 \varepsilon &\lesssim \Theta_{p_3}(R)\left(\int_{Q_R}|u_2(t,x)|^{p_3}\,\Psi(x)\,\phi_R^*(t,x)d(t,x)\right)^{\frac{1}{p_3}}, \nonumber \\
\mathcal{I}_R[u_2]+ C^0_3 \varepsilon &\lesssim \Theta_{p_4}(R)\left(\int_{Q_R}|u_3(t,x)|^{p_4}\,\Psi(x)\,\phi_R^*(t,x)d(t,x)\right)^{\frac{1}{p_4}}, \nonumber \\
\quad \vdots & \nonumber \\
\mathcal{I}_R[u_{\mathtt{k}-1}]+ C^0_\mathtt{k} \varepsilon &\lesssim \Theta_{p_1}(R)\left(\int_{Q_R}|u_\mathtt{k}(t,x)|^{p_1}\,\Psi(x)\,\phi_R^*(t,x)d(t,x)\right)^{\frac{1}{p_1}}. \label{proof.3}
\end{align}
Without loss of generality, we assume that $\gamma_{\mathtt{k}}= \max\{\gamma_1,\gamma_2,\cdots,\gamma_{\mathtt{k}}\}$. A straightforward calculation gives
$$ \gamma_{\mathtt{k}}= \frac{1+ p_{\mathtt{k}}+ p_{\mathtt{k}-1}p_{\mathtt{k}}+\cdots+ p_2p_3\cdots p_{\mathtt{k}}}{\prod^{\mathtt{k}}_{\ell=1} p_\ell-1}. $$
From the condition \eqref{Critical_Condition}, one gains
\begin{equation}
\gamma_{\mathtt{k}}\ge \frac{d}{2}, \quad\text{ that is, }\quad \Gamma(n,p_1,p_2,\cdots,p_{\mathtt{k}}):= \gamma_{\mathtt{k}}- \frac{d}{2}\ge 0. \label{Critical_Condition*}
\end{equation}
Let us now separate our consideration into two cases as follows:
\begin{itemize}[leftmargin=*]
\item[$\bullet$] \textbf{Case 1:} $d\ge3$ and $\beta\ne 0$. At first, let us devote our attention to the subcritical case $\Gamma(n,p_1,p_2,\cdots,p_{\mathtt{k}})>0$. Now we plug the above chain of estimates into (\ref{proof.3}) successively to achieve
\begin{align}
\mathcal{I}_R[u_{\mathtt{k}-1}]+ C^0_\mathtt{k} \varepsilon &\lesssim R^{d-\frac{d+2}{p_1}}\big(\mathcal{I}_R[u_{\mathtt{k}}]\big)^{\frac{1}{p_1}} \qquad \big(\text{since }\phi_R^*(t,x)\le \phi_R(t,x) \text{ in }Q_R\big) \nonumber \\
&\lesssim R^{d-\frac{d+2}{p_1}}\left(R^{d-\frac{d+2}{p_2}}\left(\int_{Q_R}|u_1(t,x)|^{p_2}\,\Psi(x)\,\phi_R^*(t,x)d(t,x)\right)^{\frac{1}{p_2}}\right)^{\frac{1}{p_1}} \nonumber \\
&\quad= R^{d-\frac{2}{p_1}-\frac{d+2}{p_1p_2}}\left(\int_{Q_R}|u_1(t,x)|^{p_2}\,\Psi(x)\,\phi_R^*(t,x)d(t,x)\right)^{\frac{1}{p_1p_2}} \nonumber \\
&\lesssim R^{d-\frac{2}{p_1}-\frac{d+2}{p_1p_2}}\big(\mathcal{I}_R[u_1]\big)^{\frac{1}{p_1p_2}} \nonumber \\
&\quad \vdots \nonumber \\
&\lesssim R^{d- 2\big(\frac{1}{p_1}+ \frac{1}{p_1p_2}+\cdots+\frac{1}{p_1p_2\cdots p_{\mathtt{k}-1}}\big)- \frac{d+2}{p_1p_2\cdots p_{\mathtt{k}}}}\mathcal{I}_R[u_{\mathtt{k}-1}]^{\frac{1}{p_1p_2\cdots p_{\mathtt{k}}}} \nonumber \\
&\quad= R^{- 2\big(\frac{1}{p_1}+ \frac{1}{p_1p_2}+\cdots+\frac{1}{p_1p_2\cdots p_{\mathtt{k}}}\big)+ d\big(1- \frac{1}{p_1p_2\cdots p_{\mathtt{k}}}\big)}\mathcal{I}_R[u_{\mathtt{k}-1}]^{\frac{1}{p_1p_2\cdots p_{\mathtt{k}}}}, \label{proof.4}
\end{align}
which is equivalent to
\begin{equation}
C^0_\mathtt{k} \varepsilon \lesssim R^{- 2\big(\frac{1}{p_1}+ \frac{1}{p_1p_2}+\cdots+\frac{1}{p_1p_2\cdots p_{\mathtt{k}}}\big)+ d\big(1- \frac{1}{p_1p_2\cdots p_{\mathtt{k}}}\big)}\mathcal{I}_R[u_{\mathtt{k}-1}]^{\frac{1}{p_1p_2\cdots p_{\mathtt{k}}}}- \mathcal{I}_R[u_{\mathtt{k}-1}]. \label{proof.5}
\end{equation}
Thus, the employment of the elementary inequality
$$ A\,y^s- y \le A^{\frac{1}{1-s}} \quad \text{ for any } A>0,\, y \ge 0 \text{ and } 0< s< 1 $$
follows immediately
\begin{align}
&R^{- 2\big(\frac{1}{p_1}+ \frac{1}{p_1p_2}+\cdots+\frac{1}{p_1p_2\cdots p_{\mathtt{k}}}\big)+ d\big(1- \frac{1}{p_1p_2\cdots p_{\mathtt{k}}}\big)}\mathcal{I}_R[u_{\mathtt{k}-1}]^{\frac{1}{p_1p_2\cdots p_{\mathtt{k}}}}- \mathcal{I}_R[u_{\mathtt{k}-1}] \nonumber \\
&\hspace{5cm} \le R^{-2\cdot\frac{1+p_{\mathtt{k}}+\cdots+ p_2p_3\cdots p_{\mathtt{k}}}{p_1p_2\cdots p_{\mathtt{k}}-1}+ d} \label{proof.6}
\end{align}
for all $R > R_0$. Linking (\ref{proof.5}) and (\ref{proof.6}) we deduce
\begin{equation}
C^0_\mathtt{k} \varepsilon \lesssim R^{-2\cdot\frac{1+p_{\mathtt{k}}+\cdots+ p_2p_3\cdots p_{\mathtt{k}}}{p_1p_2\cdots p_{\mathtt{k}}-1}+ d} \label{proof.7}
\end{equation}
for all $R > R_0$. Because of the assumption $\Gamma(n,p_1,p_2,\cdots,p_{\mathtt{k}})>0$, letting $R\to \sqrt{T_\varepsilon}$ in (\ref{proof.7}) we obtain
$$ T_\varepsilon\le C\varepsilon^{-\Gamma(n,p_1,p_2,\cdots,p_{\mathtt{k}})^{-1}}. $$
This is the third estimate for lifespan of solutions what we wanted to prove. The next step is to focus on the critical case $\Gamma(n,p_1,p_2,\cdots,p_{\mathtt{k}})=0$, where there exist two exponents $p_{j_1}\ne p_{j_2}$ with $j_1,j_2\in \{1,2,\cdots,\mathtt{k}\}$ and $j_1\ne j_2$. After repeating an analogous manner to (\ref{proof.4}), one may arrive at the following estimate:
\begin{align}
\mathcal{I}_R[u_{\mathtt{k}-1}]+ C^0_\mathtt{k} \varepsilon &\lesssim R^{- 2\big(\frac{1}{p_1}+ \frac{1}{p_1p_2}+\cdots+\frac{1}{p_1p_2\cdots p_{\mathtt{k}}}\big)+ d\big(1- \frac{1}{p_1p_2\cdots p_{\mathtt{k}}}\big)} \nonumber \\
&\qquad \times \left(\int_{Q_R}|u_{\mathtt{k}-1}(t,x)|^{p_\mathtt{k}}\,\Psi(x)\,\phi_R^*(t,x)d(t,x)\right)^{\frac{1}{p_1p_2\cdots p_{\mathtt{k}}}} \nonumber \\
&\quad= \left(\int_{Q_R}|u_{\mathtt{k}-1}(t,x)|^{p_\mathtt{k}}\,\Psi(x)\,\phi_R^*(t,x)d(t,x)\right)^{\frac{1}{p_1p_2\cdots p_{\mathtt{k}}}}, \label{proof.8}
\end{align}
where we note that the assumption $\Gamma(n,p_1,p_2,\cdots,p_{\mathtt{k}})=0$ is used. Let us now define the following auxiliary functionals:
\begin{align*}
h_{p_\mathtt{k}}= h_{p_\mathtt{k}}(r) &= \int_{Q_R}|u_{\mathtt{k}-1}(t,x)|^{p_\mathtt{k}}\,\Psi(x)\,\phi_r^*(t,x)d(t,x)\ \ \mbox{and}\ \ \mathcal{H}_{p_\mathtt{k}}= \mathcal{H}_{p_\mathtt{k}}(R)=\int_0^R h_{p_\mathtt{k}}(r)r^{-1}dr.
\end{align*}
On the one hand, carrying out the change of variable $\rho=\f{t^2+(|x|-1)^4}{r^4}$ one derives
\begin{align}
\mathcal{H}_{p_\mathtt{k}}(R)&=\int_0^R \left(\int_0^{T_{\varepsilon}}\int_{\Omega}|u_{\mathtt{k}-1}(t,x)|^{p_\mathtt{k}}\,\Psi(x)\,\phi_r^*(t,x)d(t,x)\right)\,r^{-1}dr \nonumber \\
&=\frac{1}{4} \int_{Q_R}|u_{\mathtt{k}-1}(t,x)|^{p_\mathtt{k}}\,\Psi(x)\,\Big(\int_{\frac{t^2+(|x|-1)^4}{R^4}}^{\infty}\big(\varphi^*(\rho)\big)^{\lambda+2}\rho^{-1}d\rho\Big) d(t,x) \nonumber \\
&\le \frac{1}{4} \int_{Q_R}|u_{\mathtt{k}-1}(t,x)|^{p_\mathtt{k}}\,\Psi(x)\,\left(\int_{1/2}^1\big(\varphi^*(\rho)\big)^{\lambda+2}\rho^{-1}d\rho\right)d(t,x), \label{proof.9}
\end{align}
where the support condition for $\varphi^*(\rho)$ is applied to \eqref{proof.9}. On the other hand, using the property $\varphi^*(\rho)\equiv \varphi(\rho)$ for any $\rho\in[1/2,1]$ in \eqref{proof.9} we have
\begin{align}
\mathcal{H}_{p_\mathtt{k}}(R) &\le \frac{1}{4}\int_{Q_R}|u_{\mathtt{k}-1}(t,x)|^{p_\mathtt{k}}\,\Psi(x)\,\sup_{r \in (0,R)}\left(\varphi\left(\frac{t^2+(|x|-1)^4}{r^4}\right)\right)^{\lambda+2} \left(\int_{1/2}^1 \rho^{-1}d\rho\right)d(t,x) \nonumber \\
&\le \frac{\log 2}{4} \int_{Q_R}|u_{\mathtt{k}-1}(t,x)|^{p_\mathtt{k}}\,\Psi(x)\,\left(\varphi\left(\frac{t^2+(|x|-1)^4}{R^4}\right)\right)^{\lambda+2}d(t,x) \nonumber \\
&\quad= \frac{\log 2}{4} \int_{Q_R}|u_{\mathtt{k}-1}(t,x)|^{p_\mathtt{k}}\,\Psi(x)\,\phi_R(t,x)d(t,x)d(t,x)= \frac{\log 2}{4}\mathcal{I}_R[u_{\mathtt{k}-1}]. \label{proof.10}
\end{align}
Moreover, it is obvious to recognize the relation
\begin{equation}
h_{p_\mathtt{k}}(R)= R\mathcal{H}'_{p_\mathtt{k}}(R). \label{proof.11}
\end{equation}
Hence, collecting all the obtained estimates from \eqref{proof.8} to \eqref{proof.11} we conclude
$$ \frac{4}{\log 2}\mathcal{H}_{p_\mathtt{k}}(R)+ C^0_\mathtt{k} \varepsilon \lesssim \left(R\mathcal{H}'_{p_\mathtt{k}}(R)\right)^{\frac{1}{p_1p_2\cdots p_{\mathtt{k}}}}, $$
that is,
$$ R^{-1}\le C\left(\frac{4}{\log 2}\mathcal{H}_{p_\mathtt{k}}(R)+ C^0_\mathtt{k} \varepsilon\right)^{-p_1p_2\cdots p_{\mathtt{k}}}\mathcal{H}'_{p_\mathtt{k}}(R). $$
Taking integration of both the sides of the above estimate over $[R_0,\sqrt{T_\varepsilon}]$ one finds
\begin{align*}
\log\sqrt{T_\varepsilon}- \log R_0 &\le C\int_{R_0}^{T_\varepsilon}\left(\frac{4}{\log 2}\mathcal{H}_{p_\mathtt{k}}(R)+ C^0_\mathtt{k} \varepsilon\right)^{-p_1p_2\cdots p_{\mathtt{k}}}\mathcal{H}'_{p_\mathtt{k}}(R)dR \\ 
&\le \frac{C\log 2}{4(1- p_1p_2\cdots p_{\mathtt{k}})}\left(\frac{4}{\log 2}\mathcal{H}_{p_\mathtt{k}}(R)+ C^0_\mathtt{k} \varepsilon\right)^{1-p_1p_2\cdots p_{\mathtt{k}}}\Big|_{R=R_0}^{R=\sqrt{T_\varepsilon}} \\
&\le \frac{C\log 2}{4(p_1p_2\cdots p_{\mathtt{k}}-1)}\left(\frac{4}{\log 2}\mathcal{H}_{p_\mathtt{k}}(R_0)+ C^0_\mathtt{k} \varepsilon\right)^{1-p_1p_2\cdots p_{\mathtt{k}}} \\
&\le \frac{C\,(C^0_\mathtt{k})^{1-p_1p_2\cdots p_{\mathtt{k}}}\log 2}{4(p_1p_2\cdots p_{\mathtt{k}}-1)}\varepsilon^{1-p_1p_2\cdots p_{\mathtt{k}}},
\end{align*}
which yields
$$ T_\varepsilon \le \exp\left(\frac{C\,(C^0_\mathtt{k})^{1-p_1p_2\cdots p_{\mathtt{k}}}\log 2}{2(p_1p_2\cdots p_{\mathtt{k}}-1)}\varepsilon^{-(p_1p_2\cdots p_{\mathtt{k}}-1)}+ 2\log R_0\right). $$
This is to indicate the next desired estimate for lifespan of solutions. Finally, we will give our verification to the critical case $\Gamma(n,p_1,p_2,\cdots,p_{\mathtt{k}})=0$ when $p_1=p_2=\cdots=p_{\mathtt{k}}$. It follows that
$$ p_1=p_2=\cdots=p_{\mathtt{k}}= 1+\frac{2}{n}=:p^*. $$
Obviously, one should realize that the following relation holds:
\begin{align}
&\partial^2_t (u_1+u_2+\cdots+u_{\mathtt{k}})- \Delta (u_1+u_2+\cdots+u_{\mathtt{k}})+ \partial_t (u_1+u_2+\cdots+u_{\mathtt{k}}) \nonumber \\
&\qquad = |u_1|^{p^*}+ |u_2|^{p^*}+\cdots+|u_{\mathtt{k}}|^{p^*}\ge C|u_1+u_2+\cdots+u_{\mathtt{k}}|^{p^*}. \label{proof.12}
\end{align}
For this reason, we consider the treatment of the system \eqref{Equation_Main} as that of the single equations \eqref{Equation_2} and \eqref{Equation_3} accompanied with the zero Robin boundary condition. Then, following the same approach as in the paper \cite{IkedaSobajima2019-1} we may arrive at
$$ T_\varepsilon \le \exp\left(C\varepsilon^{-(p^*-1)}\right). $$

\item[$\bullet$] \textbf{Case 2:} $d=2$ and $\beta\ne 0$. Repeating an argument as we did in Case $1$ we give the following estimates:
\begin{align}
\mathcal{I}_R[u_{\mathtt{k}-1}]+ C^0_\mathtt{k} \varepsilon &\lesssim R^{2-\frac{4}{p_1}}(\log R)^{1-\frac{1}{p_1}}\big(\mathcal{I}_R[u_{\mathtt{k}}]\big)^{\frac{1}{p_1}} \qquad \big(\text{since }\phi_R^*(t,x)\le \phi_R(t,x) \text{ in }Q_R\big) \nonumber \\
&\lesssim R^{2-\frac{4}{p_1}}(\log R)^{1-\frac{1}{p_1}}\left(R^{2-\frac{4}{p_2}}(\log R)^{1-\frac{1}{p_2}}\left(\int_{Q_R}|u_1(t,x)|^{p_2}\,\Psi(x)\,\phi_R^*(t,x)d(t,x)\right)^{\frac{1}{p_2}}\right)^{\frac{1}{p_1}} \nonumber \\
&\quad = R^{2-\frac{2}{p_1}-\frac{4}{p_1p_2}}(\log R)^{1-\frac{1}{p_1p_2}}\left(\int_{Q_R}|u_1(t,x)|^{p_2}\,\Psi(x)\,\phi_R^*(t,x)d(t,x)\right)^{\frac{1}{p_1p_2}} \nonumber \\
&\lesssim R^{2-\frac{2}{p_1}-\frac{4}{p_1p_2}}(\log R)^{1-\frac{1}{p_1p_2}}\big(\mathcal{I}_R[u_1]\big)^{\frac{1}{p_1p_2}} \nonumber \\
&\quad \vdots \nonumber \\
&\lesssim R^{2- 2\big(\frac{1}{p_1}+ \frac{1}{p_1p_2}+\cdots+\frac{1}{p_1p_2\cdots p_{\mathtt{k}-1}}\big)- \frac{4}{p_1p_2\cdots p_{\mathtt{k}}}}(\log R)^{1-\frac{1}{p_1p_2\cdots p_{\mathtt{k}}}} \nonumber \\
&\hspace{3cm} \times\left(\int_{Q_R}|u_{\mathtt{k}-1}(t,x)|^{p_\mathtt{k}}\,\Psi(x)\,\phi_R^*(t,x)d(t,x)\right)^{\frac{1}{p_1p_2\cdots p_{\mathtt{k}}}} \nonumber \\
&\quad= R^{- 2\big(\frac{1}{p_1}+ \frac{1}{p_1p_2}+\cdots+\frac{1}{p_1p_2\cdots p_{\mathtt{k}}}\big)+ 2\big(1- \frac{1}{p_1p_2\cdots p_{\mathtt{k}}}\big)}(\log R)^{1-\frac{1}{p_1p_2\cdots p_{\mathtt{k}}}} \nonumber \\
&\hspace{3cm} \times\left(\int_{Q_R}|u_{\mathtt{k}-1}(t,x)|^{p_\mathtt{k}}\,\Psi(x)\,\phi_R^*(t,x)d(t,x)\right)^{\frac{1}{p_1p_2\cdots p_{\mathtt{k}}}}. \label{proof.13}
\end{align}
We pay attention that the subcritical case is equivalent to $\Gamma(2,p_1,p_2,\cdots,p_{\mathtt{k}})>0$. Then, the immediate employment of Lemma \ref{Useful_lemma_2D} to (\ref{proof.13}) with the function $\eta(t,x)=|u_{\mathtt{k}-1}(t,x)|^{p_\mathtt{k}}\,\Psi(x)$ and the parameters
$$\omega= C^0_\mathtt{k} \varepsilon,\,\sigma= 2\Gamma(2,p_1,p_2,\cdots,p_{\mathtt{k}}),\,\mu=1 \text{ and }p= p_1p_2\cdots p_{\mathtt{k}}$$
leads to
$$\sqrt{T_\ep}\le C\Big(\ep^{-1}\log\big(\ep^{-1}\big)\Big)^{\frac{1}{2\Gamma(2,p_1,p_2,\cdots,p_{\mathtt{k}})}}, $$
which implies what we wanted to show. When the critical case occurs, i.e. $\Gamma(2,p_1,p_2,\cdots,p_{\mathtt{k}})=0$, with $p_1=p_2=\cdots=p_{\mathtt{k}}$, we have $p_1=p_2=\cdots=p_{\mathtt{k}}=2$. Therefore, noticing again the relation \eqref{proof.12} we may apply an analogous strategy used in the paper \cite{IkedaSobajima2019-1} to conclude
$$ T_\varepsilon \le \exp\exp\Big(C\ep^{-1}\Big). $$

\item[$\bullet$] \textbf{Case 3:} $d\ge 2$ and $\beta= 0$. By carrying out the same procedure as we did in Case $1$, one may conclude that all the desired estimates for lifespan in Theorem \ref{Upper_Bound_Thoerem} hold.
\end{itemize}

\noindent Summarizing, our proof is completed.

\section{Concluding remarks}\label{Sec.Conlusions}
\begin{remark}
In this paper, we have employed the test function method with a special choice of test functions, which approximates the harmonic functions fulfilling some of typical boundary conditions (the Dirichlet/Neumann/Robin boundary condition) on $\partial \Omega$ for any $d\ge 2$ separately, to indicate both the blow-up result in a finite time and the sharp upper bound of lifespan estimates for small solutions to \eqref{Equation_Main}. This remark tells us that the used method can be also directly applicable to the weakly coupled system of semi-linear classical wave equations with double damping terms in an exterior domain as follows:
\begin{equation}\label{Equation_Double.Damping}
\begin{cases}
\partial^2_t u_1(t,x)- \Delta u_1(t,x)+ \partial_t u_1(t,x)- \Delta \partial_t u_1(t,x)= |u_{\mathtt{k}}(t,x)|^{p_1}, &(t,x) \in (0,T) \times \Omega, \\
\partial^2_t u_2(t,x)- \Delta u_2(t,x)+ \partial_t u_2(t,x)- \Delta \partial_t u_2(t,x)= |u_1(t,x)|^{p_2}, &(t,x) \in (0,T) \times \Omega, \\
\quad \vdots \\
\partial^2_t u_{\mathtt{k}}(t,x)- \Delta u_{\mathtt{k}}(t,x)+ \partial_t u_{\mathtt{k}}(t,x)- \Delta \partial_t u_{\mathtt{k}}(t,x)= |u_{\mathtt{k}-1}(t,x)|^{p_{\mathtt{k}}}, &(t,x) \in (0,T) \times \Omega, \\
\alpha\f{\partial u_\ell}{\partial n^+}(t,x)+\beta u_\ell(t,x)=0, &(t,x) \in (0,T) \times \partial\Omega, \\
u_\ell(0,x)= \varepsilon u_{0,\ell}(x),\quad \partial_t u_\ell(0,x)= \varepsilon u_{1,\ell}(x), &x \in \Omega,
\end{cases}
\end{equation}
with $\ell=1,2,\cdots,\mathtt{k}$ and for any $d\ge 2$. Following an analogous approach to this paper we may claim that the statements of Theorem \ref{LWP_Prop} and Theorem \ref{Upper_Bound_Thoerem} still remain valid to \eqref{Equation_Double.Damping}. Additionally, in term of studying the special case of \eqref{Equation_Double.Damping} when $\mathtt{k}=1$ and $\alpha=0$ we refer the interested readers to the recent paper of D'Abbicco-Ikehata-Takeda \cite{DAbbiccoIkehataTakeda2019}. However, their paper did not report any information about lifespan estimates for solutions, especially, a blow-up result for the critical case $p=2$ with $d=2$ was also not included. For this reason, we can say that our results are not only to fill this lack of the cited paper in the case of the Dirichlet boundary condition but also to extend the further results for the Neumann and Robin boundary conditions.
\end{remark}

\begin{remark}
Speaking about a desired result for estimating upper bound of lifespan of solutions to \eqref{Equation_Main} in one-dimensional case, let's take the harmonic function $\Psi= \Psi(x)$ given by
$$\Psi(x)= \begin{cases}
|x|-1+\f{\alpha}{\beta} &\text{ if }\ \ \beta\ne 0,\\
1 &\text{ if }\ \ \beta=0,
\end{cases}$$
in place of (\ref{Test_function_x}) and (\ref{Test_function_x*}), respectively. By carrying out several straightforward calculations as we did in Lemma \ref{Useful_Lemma}, one sees that all the auxiliary estimates in Lemma \ref{Useful_Lemma} are also true for $d=1$. Then, with the same assumptions as in Theorem \ref{Upper_Bound_Thoerem}, we may repeat some similar steps to those in the proof of Theorem \ref{Upper_Bound_Thoerem} for Case $1$ to conclude the following estimates for $d=1$:
\begin{equation}\label{Lifespan_1D}
{\rm LifeSpan}(u_1,u_2,\cdots,u_{\mathtt{k}})\le \begin{cases}
C\varepsilon^{-(\max\{\gamma_1,\gamma_2,\cdots,\gamma_{\mathtt{k}}\}- 1)^{-1}} &\text{ if }\ \ \max\{\gamma_1,\gamma_2,\cdots,\gamma_{\mathtt{k}}\}> 1,\ \ \beta\ne 0,\\
C\varepsilon^{-(\max\{\gamma_1,\gamma_2,\cdots,\gamma_{\mathtt{k}}\}- 1/2)^{-1}} &\text{ if }\ \ \max\{\gamma_1,\gamma_2,\cdots,\gamma_{\mathtt{k}}\}> 1/2,\ \ \beta= 0.
\end{cases}
\end{equation}
To establish this, we determine the function $\Theta_{p_\ell}=\Theta_{p_\ell}(R)$ by
$$\Theta_{p_\ell}(R)= \begin{cases}
R^{2-\frac{4}{p_\ell}} &\text{ if }\ \ \beta\ne 0,\\
R^{1-\frac{3}{p_\ell}} &\text{ if }\ \ \beta=0,
\end{cases}$$
with $\ell=1,2,\cdots,\mathtt{k}$ instead of (\ref{Auxiliary_Funct}), which comes from the following observations:
$$\int_{Q_R^*}\Psi(x)d(t,x)\lesssim \begin{cases}
R^2(R+1)^2\approx R^4 &\text{ if }\ \ \beta\ne 0,\\
R^2(R+1)\approx R^3 &\text{ if }\ \ \beta=0,
\end{cases}$$
when $d=1$. One recognizes that if we replace $\mathtt{k}=1$, then the relation $\max\{\gamma_1,\gamma_2,\cdots,\gamma_{\mathtt{k}}\}> 1$ is equivalent to $p_1<2$ for $d=1$. Obviously, there is a gap between $2$ and $p_{\rm Fuj}(1)=3$ so that the obtained upper bound in (\ref{Lifespan_1D}) seems to be not sharp when the Dirichlet boundary condition is considered, i.e. $\alpha= 0$ and $\beta\ne 0$. However, this phenomenon is reasonable due to the fact no manuscript has succeeded in indicating what a critical exponent is in this case so far (see also \cite{Ikehata2003}).
\end{remark}

\section*{Appendix}
\begin{lemma}\label{Useful_lemma_2D}
Let $\omega>0$, $C_0>0$, $R_1>0$, $\sigma\ge 0$ and $\mu>0$. We assume $0\le \eta= \eta(t,x)\in L^1_{\rm loc}\Big([0,T),L^1(\Omega)\Big)$ for $T>R_1$ satisfying the following inequality:
$$\omega+ \int_{Q_R}\eta(t,x) \phi_R(t,x)\,d(t,x)\le C_0\,R^{-\frac{\sigma}{p'}}\big(\log R\big)^{\frac{\mu}{p'}}\left(\int_{Q_R}\eta(t,x) \phi^*_R(t,x)\,d(t,x)\right)^{\frac{1}{p}} $$
for any $R\in \Big[R_1,\sqrt{T}\Big)$, where $\phi_R(t,x)$, $\phi^*_R(t,x)$ and $Q_R$ are introduced as in Section \ref{Test_functions}. Then, they hold
$$\sqrt{T}\le 
\begin{cases}
C\omega^{-\frac{1}{\sigma}}\Big(\log\big(\omega^{-1}\big)\Big)^{\frac{\mu}{\sigma}} &\text{ if }\ \ \sigma>0 \text{ and } \mu>0, \\
\exp\Big(C\omega^{-\frac{p-1}{1-\mu(p-1)}}\Big) &\text{ if }\ \ \sigma=0 \text{ and } 0<\mu<\frac{1}{p-1}, \\
\exp\exp\Big(C\omega^{-(p-1)}\Big) &\text{ if }\ \ \sigma=0 \text{ and } \mu=\frac{1}{p-1},
\end{cases}$$
where $C$ is a positive constant independent of $\omega$.
\end{lemma}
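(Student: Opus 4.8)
The plan is to iterate the given nonlinear integral inequality exactly as in the proof of Theorem \ref{Upper_Bound_Thoerem} (Cases 1 and 2), splitting into the three regimes according to whether $\sigma>0$ or $\sigma=0$. Throughout, write $y(R):=\int_{Q_R}\eta(t,x)\phi_R(t,x)\,d(t,x)$ and note $0<\phi_R^*\le \phi_R$ on $Q_R$, so that the hypothesis reads
$$
\omega+y(R)\le C_0\,R^{-\sigma/p'}(\log R)^{\mu/p'}\,y(R)^{1/p}.
$$
First I would treat the \emph{subcritical case} $\sigma>0$. Here one simply observes $\omega\le C_0 R^{-\sigma/p'}(\log R)^{\mu/p'} y(R)^{1/p}-y(R)$, and applies the elementary inequality $Ay^s-y\le A^{1/(1-s)}$ with $s=1/p$, $A=C_0 R^{-\sigma/p'}(\log R)^{\mu/p'}$. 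Since $1/(1-s)=p'$, this yields $\omega\lesssim R^{-\sigma}(\log R)^{\mu}$ for all $R\in[R_1,\sqrt T)$. Letting $R\to\sqrt T$ and inverting the relation $\omega\sim R^{-\sigma}(\log R)^\mu$ (which is where a routine but slightly delicate asymptotic inversion occurs: one checks that $R\le C\omega^{-1/\sigma}(\log(\omega^{-1}))^{\mu/\sigma}$ solves it up to constants) gives $\sqrt T\le C\omega^{-1/\sigma}(\log(\omega^{-1}))^{\mu/\sigma}$.

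Next I would handle the \emph{critical cases} $\sigma=0$, following the slicing device from \eqref{proof.9}--\eqref{proof.11}. Introduce $h(r):=\int_{Q_r}\eta\,\phi_r^*\,d(t,x)$ and its logarithmic average $\mathcal H(R):=\int_0^R h(r)r^{-1}\,dr$; the change of variables $\rho=(t^2+(|x|-1)^4)/r^4$ shows (exactly as in \eqref{proof.9}--\eqref{proof.10}) that $\mathcal H(R)\lesssim y(R)$, and differentiation gives $h(R)=R\mathcal H'(R)$. Feeding these into the hypothesis with $\sigma=0$ produces a differential inequality of the form
$$
\mathcal H(R)+\omega\le C(\log R)^{\mu/p'}\big(R\,\mathcal H'(R)\big)^{1/p},
$$
hence
$$
\frac{\mathcal H'(R)}{\big(\mathcal H(R)+\omega\big)^{p}}\ge \frac{c}{R(\log R)^{\mu p/p'}}=\frac{c}{R(\log R)^{\mu(p-1)}}.
$$
Integrating over $[R_1,\sqrt T]$: the left side integrates to $\tfrac{1}{p-1}\big[(\mathcal H(R_1)+\omega)^{-(p-1)}-(\mathcal H(\sqrt T)+\omega)^{-(p-1)}\big]\le \tfrac{1}{p-1}(\mathcal H(R_1)+\omega)^{-(p-1)}\lesssim \omega^{-(p-1)}$ (using $\mathcal H(R_1)\lesssim\omega$, which one arranges by taking $R_1$ the analogue of $R_0$ so that $y(R_1)$ is dominated by $\omega$, or by absorbing it). For the right side, substitute $s=\log R$: $\int^{\log\sqrt T} s^{-\mu(p-1)}\,ds$. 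If $\mu(p-1)<1$ this behaves like $(\log\sqrt T)^{1-\mu(p-1)}$, yielding $(\log\sqrt T)^{1-\mu(p-1)}\lesssim \omega^{-(p-1)}$ and hence $\sqrt T\le\exp(C\omega^{-(p-1)/(1-\mu(p-1))})$; if $\mu(p-1)=1$ the integral is $\log\log\sqrt T$, giving $\log\log\sqrt T\lesssim\omega^{-(p-1)}$, i.e. $\sqrt T\le\exp\exp(C\omega^{-(p-1)})$.

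The main obstacle I anticipate is purely bookkeeping rather than conceptual: one must be careful that the constant $R_1$ (the analogue of $R_0$ in Section \ref{Sec_Proof_Upper}) is chosen large enough that $\log R_1>0$ and that $\mathcal H(R_1)$ is genuinely controlled by $\omega$ so it can be absorbed into the $(\mathcal H(R_1)+\omega)^{-(p-1)}\lesssim\omega^{-(p-1)}$ step — otherwise the exponent in the final bound would be spoilt. A secondary subtlety is the asymptotic inversion in the $\sigma>0$ case: one should verify that plugging $R=C\omega^{-1/\sigma}(\log\omega^{-1})^{\mu/\sigma}$ back in gives $R^{-\sigma}(\log R)^\mu\sim \omega\cdot(\text{const})$ with $\log R\sim\tfrac1\sigma\log\omega^{-1}$ up to lower-order logarithmic corrections, which suffices for an upper bound on $\sqrt T$. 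Everything else is a direct transcription of the test-function iteration already carried out in the proof of Theorem \ref{Upper_Bound_Thoerem}.
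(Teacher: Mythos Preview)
Your proposal is correct and matches the argument the paper defers to (Lemma~2.2 of \cite{IkedaSobajima2019-1}); indeed the slicing device $h(r),\,\mathcal H(R)$ and the ODE integration you outline are exactly the steps already displayed in \eqref{proof.8}--\eqref{proof.11} of Section~\ref{Sec_Proof_Upper}, specialized to a generic $\eta$. One small simplification: your worry about arranging $\mathcal H(R_1)\lesssim\omega$ is unnecessary, since $\mathcal H(R_1)\ge 0$ already gives $(\mathcal H(R_1)+\omega)^{-(p-1)}\le\omega^{-(p-1)}$ directly.
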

\begin{proof}
We follow the proof of Lemma 2.2 in \cite{IkedaSobajima2019-1} with a minor modification to conclude the desired estimates above.
\end{proof}

\section*{Acknowledgements}
Tuan Anh Dao was funded by Vingroup JSC and supported by the Postdoctoral Scholarship Programme of Vingroup Innovation Foundation (VINIF), Institute of Big Data, code VINIF.2021.STS.16. Masahiro Ikeda is supported by JST CREST Grant Number JPMJCR1913, Japan and Grant-in-Aid for Young Scientists Research (No.19K14581), Japan Society for the Promotion of Science.


\begin{thebibliography}{99}

\bibitem{Adams75} R. A. Adams, \textit{Sobolev spaces}, Academic Press, New York, 1975.

\bibitem{ArendtUrban10} W. Arendt and K. Urban, \textit{Partielle Differenzialgleichungen}, Spektrum, Heidelberg, 2010.

\bibitem{BarasPierre1985} P. Baras, M. Pierre, \textit{Crit\`{e}re d'existence de solutions positives pour des \'{e}quations semi-lin\'{e}aires non monotones}, Ann. Inst. H. Poincar\'{e} Anal. Non Lin\'{e}aire, \textbf{2} (1985), 185--212.

\bibitem{ChenDao2021} W. Chen, T.A. Dao, \textit{Sharp lifespan estimates for the weakly coupled system of semilinear damped wave equations in the critical case}, Math. Ann., (2022), to appear.

\bibitem{CaHa98} T. Cazenave, A. Haraux, \textit{An Introduction to Semilinear Evolution Equations}, Oxford University Press, 1998.

\bibitem{DAbbiccoIkehataTakeda2019} M. D'Abbicco, R. Ikehata, H. Takeda, \textit{Critical exponent for semi-linear wave equations with double damping terms in exterior domains}, Nonlinear Differ. Equ. Appl., \textbf{26} (2019), 56.

\bibitem{DaoReissig2021} T.A. Dao, M. Reissig, \textit{The interplay of critical regularity of nonlinearities in a weakly coupled system of semi-linear damped wave equations}, J. Differential Equations, \textbf{299} (2021), 1--32.

\bibitem{Fujita1966} H. Fujita, \textit{On the blowing up of solutions of the Cauchy problem for $u_t= \Delta u+ |u|^{1+\alpha}$}, J. Sci. Univ. Tokyo Sec. I, \textbf{13} (1966), 109--124.

\bibitem{FinoIbrahimWehbe2017} A.Z. Fino, H. Ibrahim, A. Wehbe, \textit{A blow-up result for a nonlinear damped wave equation in exterior domain: The critical case}, Comput. Math. Appl., \textbf{73} (2017), 2415--2420.

\bibitem{FujiwaraIkedaWakasugi2019} K. Fujiwara, M. Ikeda, Y. Wakasugi, \textit{Estimates of lifespan and blow-up rates for the wave equation with a time-dependent damping and a power-type nonlinearity}, Funkc. Ekvac., \textbf{62} (2019), 157--189.

\bibitem{Hayakawa1973} K. Hayakawa, \textit{On nonexistence of global solutions of some semilinear parabolic differential equations}, Proc. Japan Acad., \textbf{49} (1973), 503--505.

\bibitem{HayashiNaumkinTominaga2015} N. Hayashi, P. I. Naumkin, M. Tominaga, \textit{Remark on a weakly coupled system of nonlinear damped wave equations}, J. Math. Anal. Appl., \textbf{428} (2015), 490--501.

\bibitem{Ikehata2002} R. Ikehata, \textit{Small data global existence of solutions for dissipative wave equations in an exterior domain}, Funkcial. Ekvac., \textbf{44} (2002), 259--269.

\bibitem{Ikehata2003} R. Ikehata, \textit{A remark on a critical exponent for the semilinear dissipative wave equation in the one dimensional half space}, Differential Integral Equations, \textbf{16} (2003), 727--736.

\bibitem{Ikehata2004} R. Ikehata, \textit{Global existence of solutions for semilinear damped wave equation in 2-D exterior domain}, J. Differential Equations, \textbf{200} (2004), 53--68.

\bibitem{Ikehata2005} R. Ikehata, \textit{Two dimensional exterior mixed problem for semilinear damped wave equations}, J. Math. Anal. Appl., \textbf{301} (2005), 366--377.

\bibitem{IkedaOgawa2016} M. Ikeda, T. Ogawa, \textit{Lifespan of solutions to the damped wave equation with a critical nonlinearity}, J. Differential Equations, \textbf{261} (2016), 1880--1903.

\bibitem{IkedaSobajima2019-1} M. Ikeda, M. Sobajima, \textit{Remark on upper bound for lifespan of solutions to semilinear evolution equations in a two-dimensional exterior domain}, J. Math. Anal. Appl., \textbf{470} (2019), 318--326.

\bibitem{IkedaSobajima2019-2} M. Ikeda, M. Sobajima, \textit{Sharp upper bound for lifespan of solutions to some critical semilinear parabolic, dispersive and hyperbolic equations via a test function method}, Nonlinear Anal., \textbf{182} (2019), 57--74.

\bibitem{IkedaTaniguchiWakasugiArxiv} M. Ikeda, K. Taniguchi, Y. Wakasugi, \textit{Global existence and asymptotic behavior for nonlinear damped wave equations on measure spaces}, arxiv:2106.10322.

\bibitem{IkedaWakasugi2020} M. Ikeda, Y. Wakasugi, \textit{A note on the lifespan of solutions to the semilinear damped wave equation}, Proc. Amer. Math. Soc., \textbf{148} (2015), 157--172.

\bibitem{IkedaJleliSamet2020} M. Ikeda, M. Jleli, B. Samet, \textit{On the existence and nonexistence of global solutions
for certain semilinear exterior problems with nontrivial
Robin boundary conditions}, J. Differential Equations, \textbf{269} (2020), 563--594.

\bibitem{JleliSamet2019} M. Jleli, B. Samet, \textit{New blow-up results for nonlinear boundary value problems in exterior domains}, Nonlinear Anal., \textbf{178} (2019), 348--365.

\bibitem{KiraneQafsaoui2002} M. Kirane, M. Qafsaoui, \textit{Fujita's exponent for a semilinear wave equation with linear damping}, Adv. Nonlinear Stud., \textbf{2} (2002), 41--49.

\bibitem{LaiYin2017} N. Lai, S. Yin, \textit{Finite time blow-up for a kind of initial-boundary value problem of semilinear damped wave equation}, Math. Methods Appl. Sci., \textbf{40} (2017), 1223--1230.

\bibitem{LaiZhou2019} N.A. Lai, Y. Zhou, \textit{The sharp lifespan estimate for semilinear damped wave equation with Fujita critical power in higher dimensions}, J. Math. Pures Appl. (9), \textbf{123} (2019), 229--243.

\bibitem{LiZhou1995} T. T. Li, Y. Zhou, \textit{Breakdown of solutions to $\Box u+u_t=|u|^{1+\alpha}$}, Discrete Contin. Dyn. Syst., \textbf{1} (1995), 503--520.

\bibitem{MitidieriPohozaev2001} E. Mitidieri, S.I. Pohozaev, \textit{A priori estimates and blow-up of solutions to nonlinear partial differential equations and inequalities}, Proc. Steklov. Inst. Math., \textbf{234} (2001), 1--383.

\bibitem{Narazaki2009} T. Narazaki, \textit{Global solutions to the Cauchy problem for the weakly coupled system of damped wave equations}, AIMS Proceedings Discrete Contin. Dyn. Syst., (2009), 592--601.

\bibitem{Narazaki2011} T. Narazaki, \textit{Global solutions to the Cauchy problem for a system of damped wave equations}, Differential Integral Equations, \textbf{24} (2011), 569--600.

\bibitem{Nishihara2003} K. Nishihara, \textit{$L^p-L^q$ estimates for the $3$-D damped wave equation and their application to the semilinear problem}, Sem. Notes Math. Sci., vol \textbf{6}. Ibaraki Univ. (2003), 69--83.

\bibitem{Nishihara2011} K. Nishihara, \textit{Asymptotic behavior of solutions to the semilinear wave equation with time-dependent damping}, Tokyo J. Math., \textbf{34} (2011), 327--343.

\bibitem{NishiharaWakasugi2014} K. Nishihara, Y. Wakasugi, \textit{Critical exponent for the Cauchy problem to the weakly coupled damped wave system}, Nonlinear Anal., \textbf{108} (2014), 249--259.

\bibitem{NishiharaWakasugi2015} K. Nishihara, Y. Wakasugi, \textit{Global existence of solutions for a weakly coupled system of semilinear damped wave equations}, J. Differential Equations, \textbf{259} (2015), 4172--4201.

\bibitem{Ono2003} K. Ono, \textit{Decay estimates for dissipative wave equations in exterior domains}, J. Math. Anal. Appl., \textbf{286} (2003), 540--562.

\bibitem{OgawaTakeda2009} T. Ogawa, H. Takeda, \textit{Non-existence of weak solutions to nonlinear damped wave equations in exterior domains}, Nonlinear Anal., \textbf{70} (2009), 3696--3701.

\bibitem{SunWang2007} F. Sun, M. Wang, \textit{Existence and nonexistence of global solutions for a nonlinear hyperbolic system with damping}, Nonlinear Anal., \textbf{66} (2007), 2889--2910.

\bibitem{Sobajima2019-1} M. Sobajima, \textit{Remarks on test function methods for blowup of solutions to semilinear evolution equations in sectorial domain}, RIMS, Kokyuroku, \textbf{2121} (2019), 63--73.

\bibitem{Sobajima2019-2} M. Sobajima, \textit{Global existence of solutions to semilinear damped wave equation with slowly decaying initial data in exterior domain}, Differential Integral Equations, \textbf{32} (2019), 615--638.

\bibitem{Sobajima2021} M. Sobajima, \textit{Global existence of solutions to a weakly coupled critical parabolic system in two-dimensional exterior domains}, J. Math. Anal. Appl., \textbf{501} (2021), 125214.

\bibitem{TodorovaYordanov2001} G. Todorova, B. Yordanov. \textit{Critical exponent for a nonlinear wave equation with damping}, J. Differential Equations, \textbf{174} (2001), 464--489.

\bibitem{Takeda2009} H. Takeda, \textit{Global existence and nonexistence of solutions for a system of nonlinear damped wave equations}, J. Math. Anal. Appl., \textbf{360} (2009), 631--650.

\bibitem{Weissler1981} F.B. Weissler, \textit{Existence and nonexistence of global solutions for a semilinear heat equation}, Israel J. Math., \textbf{38} (1981), 29--40.

\bibitem{Zhang2001} Q.S. Zhang, \textit{A blow-up result for a nonlinear wave equation with damping: the critical case}, C. R. Acad. Sci. Paris S\'{e}r. I Math., \textbf{333} (2001), 109--114.

\end{thebibliography}
\end{document}